\def\y{{\bf y}}
\def\x{{\bf x}}
\def\x{{\mathbf x}}
\def\x{{\bf x}}
\def\y{{\bf y}}
\def\h{{\bf h}}
\def\cH{{\cal H}}
\def\be{\begin{equation}}
\def\ee{\end{equation}}
\def\ba{\left[\begin{array}}
\def\ea{\end{array}\right]}
\def\x{{\bf x}}
\def\y{{\bf y}}
\def\1{{\bf 1}}
\def\g{{\bf g}}
\def\0{{\bf 0}}
\newtheorem{theorem}{Theorem}
\newtheorem{lemma}{Lemma}
\begin{document}

\begin{singlespace}

\title {Bounding ground state energy of Hopfield models 
}
\author{
\textsc{Mihailo Stojnic}
\\
\\
{School of Industrial Engineering}\\
{Purdue University, West Lafayette, IN 47907} \\
{e-mail: {\tt mstojnic@purdue.edu}} }
\date{}
\maketitle

\centerline{{\bf Abstract}} \vspace*{0.1in}

In this paper we look at a class of random optimization problems that arise in the forms typically known as Hopfield models. We view two scenarios which we term as the positive Hopfield form and the negative Hopfield form. For both of these scenarios we define the binary optimization problems that essentially emulate what would typically be known as the ground state energy of these models. We then present a simple mechanism that can be used to create a set of theoretical rigorous bounds for these energies. In addition to purely theoretical bounds, we also present a couple of fast optimization algorithms that can also be used to provide solid (albeit a bit weaker) algorithmic bounds for the ground state energies.

\vspace*{0.25in} \noindent {\bf Index Terms: Hopfield models; ground-state energy}.

\end{singlespace}

\section{Introduction}
\label{sec:back}

We start by looking at what is typically known in mathematical physics as the Hopfield model. The model was popularized in \cite{Hop82} (or if viewed in a different context one could say in \cite{PasFig78,Hebb49}). It essentially looks at what is called Hamiltonian of the following type
\begin{equation}
\cH(H,\x)=\sum_{i\neq j}^{n}  A_{ij}\x_i\x_j,\label{eq:ham}
\end{equation}
where
\begin{equation}
A_{ij}(H)=\sum_{l=1}^{m}  H_{il}H_{lj},\label{eq:hamAij}
\end{equation}
are the so-called quenched interactions and $H$ is an $m\times n$ matrix that can be also viewed as the matrix of the so-called stored patterns (we will typically consider scenario where $m$ and $n$ are large and $\frac{m}{n}=\alpha$ where $\alpha$ is a constant independent of $n$; however, many of our results will hold even for fixed $m$ and $n$). Each pattern is essentially a row of matrix $H$ while vector $\x$ is a vector from $R^n$ that emulates neuron states. Typically, one assumes that the patterns are binary and that each neuron can have two states (spins) and hence the elements of matrix $H$ as well as elements of vector $\x$ are typically assumed to be from set $\{-\frac{1}{\sqrt{n}},\frac{1}{\sqrt{n}}\}$. In physics literature one usually follows convention and introduces a minus sign in front of the Hamiltonian given in (\ref{eq:ham}). Since our main concern is not really the physical interpretation of the given Hamiltonian but rather mathematical properties of such forms we will avoid the minus sign and keep the form as in (\ref{eq:ham}).

To characterize the behavior of physical interpretations that can be described through the above Hamiltonian one then looks at the partition function
\begin{equation}
Z(\beta,H)=\sum_{\x\in\{-\frac{1}{\sqrt{n}},\frac{1}{\sqrt{n}}\}^n}e^{\beta\cH(H,\x)},\label{eq:partfun}
\end{equation}
where $\beta>0$ is what is typically called the inverse temperature. Depending of what is the interest of studying one can then also look at a more appropriate scaled $\log$ version of $Z(\beta,H)$ (typically called the free energy)
\begin{equation}
f_p(n,\beta,H)=\frac{\log{(Z(\beta,H)})}{\beta n}=\frac{\log{(\sum_{\x\in\{-\frac{1}{\sqrt{n}},\frac{1}{\sqrt{n}}\}^n}e^{\beta\cH(H,\x)})}}{\beta n}.\label{eq:logpartfun}
\end{equation}
Studying behavior of the partition function or the free energy of the Hopfield model of course has a long history. Since we will not focus on the entire function in this paper we just briefly mention that a long line of results can be found in e.g. excellent references \cite{PasShchTir94,ShchTir93,BarGenGueTan10,BarGenGueTan12,Tal98}. In this paper though we will focus on studying optimization/algorithmic aspects of $\frac{\log{(Z(\beta,H)})}{\beta n}$. More specifically, we will look at a particular regime $\beta,n\rightarrow\infty$ (which is typically called a zero-temperature thermodynamic limit regime or as we will occasionally call it the ground state regime). In such a regime one has
\begin{equation}
\hspace{-.3in}\lim_{\beta,n\rightarrow\infty}f_p(n,\beta,H)=
\lim_{\beta,n\rightarrow\infty}\frac{\log{(Z(\beta,H)})}{\beta n}=\lim_{n\rightarrow\infty}\frac{\max_{\x\in\{-\frac{1}{\sqrt{n}},\frac{1}{\sqrt{n}}\}^n}\cH(H,\x)}{n}
=\lim_{n\rightarrow\infty}\frac{\max_{\x\in\{-\frac{1}{\sqrt{n}},\frac{1}{\sqrt{n}}\}^n}\|H\x\|_2^2}{n},\label{eq:limlogpartfun}
\end{equation}
which essentially renders the following form (often called the ground state energy)
\begin{equation}
\lim_{\beta,n\rightarrow\infty}f_p(n,\beta,H)=\lim_{n\rightarrow\infty}\frac{\max_{\x\in\{-\frac{1}{\sqrt{n}},\frac{1}{\sqrt{n}}\}^n}\|H\x\|_2^2}{n},\label{eq:posham}
\end{equation}
which will be one of the main subjects that we will study in this paper. We will refer to the optimization part of (\ref{eq:posham}) as the positive Hopfield form.

In addition to this form we will also study its a negative counterpart. Namely, instead of the partition function given in (\ref{eq:partfun}) one can look at a corresponding partition function of a negative Hamiltonian from (\ref{eq:ham}) (alternatively, one can say that instead of looking at the partition function defined for positive temperatures/inverse temperatures one can also look at the corresponding partition function defined for negative temperatures/inverse temperatures). In that case (\ref{eq:partfun}) becomes
\begin{equation}
Z(\beta,H)=\sum_{\x\in\{-\frac{1}{\sqrt{n}},\frac{1}{\sqrt{n}}\}^n}e^{-\beta\cH(H,\x)},\label{eq:partfunneg}
\end{equation}
and if one then looks at its an analogue to (\ref{eq:limlogpartfun}) one then obtains
\begin{equation}
\hspace{-.3in}\lim_{\beta,n\rightarrow\infty}f_n(n,\beta,H)=\lim_{\beta,n\rightarrow\infty}\frac{\log{(Z(\beta,H)})}{\beta n}=\lim_{n\rightarrow\infty}\frac{\max_{\x\in\{-\frac{1}{\sqrt{n}},\frac{1}{\sqrt{n}}\}^n}-\cH(H,\x)}{n}
=\lim_{n\rightarrow\infty}\frac{\min_{\x\in\{-\frac{1}{\sqrt{n}},\frac{1}{\sqrt{n}}\}^n}\|H\x\|_2^2}{n}.\label{eq:limlogpartfunneg}
\end{equation}
This then ultimately renders the following form which is in a way a negative counterpart to (\ref{eq:posham})
\begin{equation}
\lim_{\beta,n\rightarrow\infty}f_n(n,\beta,H)=\lim_{n\rightarrow\infty}\frac{\min_{\x\in\{-\frac{1}{\sqrt{n}},\frac{1}{\sqrt{n}}\}^n}\|H\x\|_2^2}{n}.\label{eq:negham}
\end{equation}
We will then correspondingly refer to the optimization part of (\ref{eq:negham}) as the negative Hopfield form.

In the following sections we will present a collection of results that relate to behavior of the forms given in (\ref{eq:posham}) and (\ref{eq:negham}) when they are viewed in a statistical scenario. The results that we will present will essentially correspond to what is called the ground state energies of these models. As it will turn out, in the statistical scenario that we will consider, (\ref{eq:posham}) and (\ref{eq:negham}) will be almost completely characterized by their corresponding average values
\begin{equation}
\lim_{\beta,n\rightarrow\infty}Ef_p(n,\beta,H)=\lim_{n\rightarrow\infty}\frac{E\max_{\x\in\{-\frac{1}{\sqrt{n}},\frac{1}{\sqrt{n}}\}^n}\|H\x\|_2^2}{n}\label{eq:poshamavg}
\end{equation}
and
\begin{equation}
\lim_{\beta,n\rightarrow\infty}Ef_n(n,\beta,H)=\lim_{n\rightarrow\infty}\frac{E\min_{\x\in\{-\frac{1}{\sqrt{n}},\frac{1}{\sqrt{n}}\}^n}\|H\x\|_2^2}{n}.\label{eq:neghamavg}
\end{equation}

Before proceeding further with our presentation we will be a little bit more specific about the organization of the paper. In Section \ref{sec:poshop} we will present a few results that relate to behavior of the positive Hopfield form in a statistical scenario. We will then in Section \ref{sec:neghop} present the corresponding results for the negative Hopfield form. In section \ref{sec:alghop} we will present several algorithmic results that will in a way complement our findings from Sections \ref{sec:poshop} and \ref{sec:neghop}. Finally, in Section \ref{sec:conc} we will give a few concluding remarks.

\section{Positive Hopfield form}
\label{sec:poshop}

In this section we will look at the following optimization problem (which clearly is the key component in estimating the ground state energy in the thermodynamic limit)
\begin{equation}
\max_{\x\in\{-\frac{1}{\sqrt{n}},\frac{1}{\sqrt{n}}\}^n}\|H\x\|_2^2.\label{eq:posham1}
\end{equation}
For a deterministic (given fixed) $H$ this problem is of course known to be NP-hard (it essentially falls under the class of binary quadratic optimization problems). Instead of looking at the problem in (\ref{eq:posham1}) in a deterministic way i.e. in a way that assumes that matrix $H$ is deterministic, we will look at it in a statistical scenario (this is of course a typical scenario in statistical physics). Within a framework of statistical physics and neural networks the problem in (\ref{eq:posham1}) is studied assuming that the stored patterns (essentially rows of matrix $H$) are comprised of Bernoulli $\{-1,1\}$ i.i.d. random variables see, e.g. \cite{Tal98,PasShchTir94,ShchTir93}. While our results will turn out to hold in such a scenario as well we will present them in a different scenario: namely, we will assume that the elements of matrix $H$ are i.i.d. standard normals. We will then call form (\ref{eq:posham1}) with Gaussian $H$, the Gaussian positive Hopfield form. On the other hand, we will call form (\ref{eq:posham1}) with Bernoulli $H$, the Bernoulli positive Hopfield form. In the remainder of this section we will look at possible ways to estimate the optimal value of the optimization problem in (\ref{eq:posham1}). In the first part below we will introduce a strategy that can be used to obtain an upper bound on the optimal value and in the second part we will then create a corresponding lower-bounding strategy.

\subsection{Upper-bounding ground state energy of the positive Hopfield form}
\label{sec:poshopub}

As we just mentioned above, in this section we will look at problem from (\ref{eq:posham1}). In fact, to be a bit more precise, in order to make the exposition as simple as possible, we will look at its a slightly changed version given below
\begin{equation}
\xi_p=\max_{\x\in\{-\frac{1}{\sqrt{n}},\frac{1}{\sqrt{n}}\}^n}\|H\x\|_2.\label{eq:sqrtposham1}
\end{equation}
As mentioned above, we will assume that the elements of $H$ are i.i.d. standard normal random variables. Before proceeding further with the analysis of (\ref{eq:sqrtposham1}) we will recall on several well known results that relate to Gaussian random variables and the processes they create.

We start by first recalling the following results from \cite{Gordon88} that relate to statistical properties of such Gaussian processes.
\begin{theorem}(\cite{Gordon88})
\label{thm:Gordonmesh1} Let $X_{ij}$ and $Y_{ij}$, $1\leq i\leq n,1\leq j\leq m$, be two centered Gaussian processes which satisfy the following inequalities for all choices of indices
\begin{enumerate}
\item $E(X_{ij}^2)=E(Y_{ij}^2)$
\item $E(X_{ij}X_{ik})\geq E(Y_{ij}Y_{ik})$
\item $E(X_{ij}X_{lk})\leq E(Y_{ij}Y_{lk}), i\neq l$.
\end{enumerate}
Then
\begin{equation*}
P(\bigcap_{i}\bigcup_{j}(X_{ij}\geq \lambda_{ij}))\leq P(\bigcap_{i}\bigcup_{j}(Y_{ij}\geq \lambda_{ij})).
\end{equation*}
\end{theorem}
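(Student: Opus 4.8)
The plan is to prove this by the classical Gaussian interpolation (Slepian--Kahane) method, reducing the comparison of the two probabilities to a sign analysis of certain mixed second-order partial derivatives of a smooth functional. First I would rewrite each probability as the expectation of such a functional. Choose smooth, nonincreasing functions $f_{ij}:\mathbb{R}\to[0,1]$ that approximate the indicator $\mathbf{1}(\,\cdot\,<\lambda_{ij})$ from below, and set $h_i(x_{i1},\dots,x_{im})=1-\prod_{j}f_{ij}(x_{ij})$ and $\Phi(x)=\prod_i h_i$. Since $1-\prod_j f_{ij}(x_{ij})$ approximates the indicator of $\bigcup_j(x_{ij}\geq\lambda_{ij})$, one gets $E[\Phi(X)]\to P(\bigcap_i\bigcup_j(X_{ij}\geq\lambda_{ij}))$ and likewise for $Y$ as the approximation sharpens, so it suffices to prove $E[\Phi(X)]\leq E[\Phi(Y)]$.

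Next I would introduce an independent copy and interpolate: taking $X$ and $Y$ independent, define $Z_{ij}(t)=\sqrt{t}\,X_{ij}+\sqrt{1-t}\,Y_{ij}$ for $t\in[0,1]$, so that $Z(0)=Y$ and $Z(1)=X$, and set $\psi(t)=E[\Phi(Z(t))]$. Differentiating in $t$ and applying Gaussian integration by parts (Stein's identity) separately in the $X$- and $Y$-variables yields the standard formula
\[
\psi'(t)=\frac12\sum_{(i,j),(l,k)}\bigl(E[X_{ij}X_{lk}]-E[Y_{ij}Y_{lk}]\bigr)\,E\!\left[\frac{\partial^2\Phi}{\partial z_{ij}\partial z_{lk}}(Z(t))\right].
\]
The diagonal terms $(i,j)=(l,k)$ vanish by hypothesis~1, so the goal reduces to showing $\psi'(t)\leq0$, which gives $\psi(1)\leq\psi(0)$, i.e. $E[\Phi(X)]\leq E[\Phi(Y)]$.

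The crux, and the step I expect to be the main obstacle, is the sign bookkeeping of these mixed second derivatives matched against the covariance differences. Using $f_{ij}'\leq0$, $0\leq f_{ij}\leq1$, and $h_i\geq0$, a direct computation shows two things. For indices sharing the same $i$ (with $j\neq k$), $\partial^2\Phi/\partial z_{ij}\partial z_{ik}=-\bigl(\prod_{i'\neq i}h_{i'}\bigr)f_{ij}'f_{ik}'\prod_{j'\neq j,k}f_{ij'}\leq0$, while the corresponding covariance difference is $\geq0$ by hypothesis~2; for indices with $i\neq l$, the derivative factorizes as $\bigl(\prod_{i'\neq i,l}h_{i'}\bigr)(\partial_{ij}h_i)(\partial_{lk}h_l)\geq0$ since each $\partial_{ij}h_i=-f_{ij}'\prod_{j'\neq j}f_{ij'}\geq0$, while the covariance difference is $\leq0$ by hypothesis~3. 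In every surviving case the product of the covariance difference and the expected second derivative is nonpositive, so $\psi'(t)\leq0$. This is precisely where the intersection-over-$i$ / union-over-$j$ structure of the event must align with the two opposite-signed covariance conditions, and arranging the smoothing $\Phi=\prod_i(1-\prod_j f_{ij})$ so that the signs come out correctly is the delicate point.

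Finally I would close the argument by letting the $f_{ij}$ increase to the indicators $\mathbf{1}(\,\cdot\,<\lambda_{ij})$ and invoking bounded (or monotone) convergence to transfer $E[\Phi(X)]\leq E[\Phi(Y)]$ to the stated inequality. Two routine technical points remain: justifying differentiation under the expectation together with the Gaussian integration by parts (legitimate because $\Phi$ and its derivatives are bounded and the $\sqrt{t},\sqrt{1-t}$ factors cancel the chain-rule prefactors), and verifying the continuity of $\psi$ on $[0,1]$ so that integrating $\psi'\leq0$ over $(0,1)$ indeed gives $\psi(1)\leq\psi(0)$.
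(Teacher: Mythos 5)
The paper never proves Theorem \ref{thm:Gordonmesh1}: it is quoted verbatim from \cite{Gordon88} and used as a black box, so there is no in-paper argument to compare yours against. Your proof is correct, and it is essentially the classical argument for Gordon's comparison theorem (the Slepian--Kahane interpolation method, which is also how the result is proved in Gordon's work and in standard references such as Ledoux--Talagrand). The computations that matter all check out: with $\Phi=\prod_i\bigl(1-\prod_j f_{ij}\bigr)$ for smooth, nonincreasing, $[0,1]$-valued $f_{ij}$, the interpolation $Z(t)=\sqrt{t}\,X+\sqrt{1-t}\,Y$ with $X$, $Y$ independent gives $\psi'(t)=\tfrac12\sum_{(i,j),(l,k)}\bigl(E[X_{ij}X_{lk}]-E[Y_{ij}Y_{lk}]\bigr)E\bigl[\partial^2\Phi/\partial z_{ij}\partial z_{lk}\bigr]$ by Stein's identity; the diagonal terms cancel by hypothesis 1; for same-row pairs the mixed derivative $-\bigl(\prod_{i'\neq i}h_{i'}\bigr)f_{ij}'f_{ik}'\prod_{j'\neq j,k}f_{ij'}$ is indeed $\leq 0$ (product of two nonpositive $f'$ terms) against a covariance difference that is $\geq 0$ by hypothesis 2; for cross-row pairs the derivative factors as $\bigl(\prod_{i'\neq i,l}h_{i'}\bigr)(\partial_{ij}h_i)(\partial_{lk}h_l)\geq 0$ against a covariance difference that is $\leq 0$ by hypothesis 3. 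Hence $\psi'\leq 0$ and $E\Phi(X)\leq E\Phi(Y)$. Your closing limit step is also unproblematic and can be made entirely free of boundary issues: taking $f_{ij}$ equal to $1$ on $(-\infty,\lambda_{ij}-\epsilon]$ and $0$ on $[\lambda_{ij},\infty)$ makes $\Phi$ converge pointwise everywhere (not just almost everywhere) to the indicator of $\bigcap_i\bigcup_j\{x_{ij}\geq\lambda_{ij}\}$ as $\epsilon\to 0$, so bounded convergence transfers the inequality to the stated probabilities; the differentiation under the expectation is justified since $\Phi$ has bounded derivatives of all orders.
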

The following, more simpler, version of the above theorem relates to the expected values.
\begin{theorem}(\cite{Gordon88})
\label{thm:Gordonmesh2} Let $X_{ij}$ and $Y_{ij}$, $1\leq i\leq n,1\leq j\leq m$, be two centered Gaussian processes which satisfy the following inequalities for all choices of indices
\begin{enumerate}
\item $E(X_{ij}^2)=E(Y_{ij}^2)$
\item $E(X_{ij}X_{ik})\geq E(Y_{ij}Y_{ik})$
\item $E(X_{ij}X_{lk})\leq E(Y_{ij}Y_{lk}), i\neq l$.
\end{enumerate}
Then
\begin{equation*}
E(\min_{i}\max_{j}(X_{ij}))\leq E(\min_i\max_j(Y_{ij})).
\end{equation*}
\end{theorem}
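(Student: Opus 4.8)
The plan is to derive Theorem~\ref{thm:Gordonmesh2} directly from the probability comparison already established in Theorem~\ref{thm:Gordonmesh1}, by specializing the thresholds to a single common level and then integrating the resulting tail inequality. The key observation is that the intersection--union events in Theorem~\ref{thm:Gordonmesh1} collapse to min--max events precisely when all the $\lambda_{ij}$ are taken equal.

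First I would fix an arbitrary real number $\lambda$ and set $\lambda_{ij}=\lambda$ for every pair $(i,j)$. With this choice the inner union $\bigcup_j(X_{ij}\geq\lambda)$ is exactly the event $\{\max_j X_{ij}\geq\lambda\}$, and the outer intersection $\bigcap_i$ turns this into $\{\min_i\max_j X_{ij}\geq\lambda\}$; the identical reduction applies to $Y$. Hence the conclusion of Theorem~\ref{thm:Gordonmesh1}, which holds for any choice of levels, yields for every $\lambda$
\begin{equation*}
P(\min_i\max_j X_{ij}\geq\lambda)\leq P(\min_i\max_j Y_{ij}\geq\lambda).
\end{equation*}
In other words, writing $U=\min_i\max_j X_{ij}$ and $V=\min_i\max_j Y_{ij}$, the random variable $U$ is first-order stochastically dominated by $V$.

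Next I would convert this uniform tail inequality into a comparison of means through the layer-cake representation of the expectation. For any integrable real random variable $W$ one has
\begin{equation*}
E(W)=\int_0^\infty P(W\geq\lambda)\,d\lambda-\int_{-\infty}^0\left(1-P(W\geq\lambda)\right)\,d\lambda.
\end{equation*}
Applying this to both $U$ and $V$ and using $P(U\geq\lambda)\leq P(V\geq\lambda)$ for every $\lambda$, the first integrand grows and the subtracted second integrand shrinks when we pass from $U$ to $V$, so both contributions push the expectation upward and we obtain $E(U)\leq E(V)$, which is exactly the claimed inequality. Integrability causes no trouble here: since $m$ and $n$ are finite, $|U|$ and $|V|$ are dominated by $\max_{ij}|X_{ij}|$ and $\max_{ij}|Y_{ij}|$, each an absolutely integrable maximum of finitely many Gaussians.

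Because the substantive probabilistic content is already packaged inside Theorem~\ref{thm:Gordonmesh1}, there is no genuine analytic obstacle in this argument. The only point requiring a little care is the sign bookkeeping in the tail formula over $\lambda<0$, where one must track the complementary term $1-P(W\geq\lambda)$ so that the stochastic-dominance inequality is seen to move both halves of the integral in the same direction.
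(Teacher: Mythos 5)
Your derivation is correct. The reduction of the intersection--union events to min--max events under a constant threshold $\lambda_{ij}=\lambda$ is exact: for finitely many indices, $\bigcup_j\{X_{ij}\geq\lambda\}=\{\max_j X_{ij}\geq\lambda\}$ and $\bigcap_i\{\max_j X_{ij}\geq\lambda\}=\{\min_i\max_j X_{ij}\geq\lambda\}$, so Theorem~\ref{thm:Gordonmesh1} does give the uniform tail comparison $P(\min_i\max_j X_{ij}\geq\lambda)\leq P(\min_i\max_j Y_{ij}\geq\lambda)$, and your layer-cake argument (with the correct sign handling on $\lambda<0$ and the domination of $|U|,|V|$ by an integrable maximum of finitely many Gaussians) turns stochastic dominance into the stated inequality of expectations.

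One point of comparison: the paper does not prove this statement at all --- both Theorem~\ref{thm:Gordonmesh1} and Theorem~\ref{thm:Gordonmesh2} are imported verbatim from \cite{Gordon88} as known results, and Gordon's own treatment establishes them by Gaussian interpolation arguments rather than by deducing one from the other. What your argument buys is a demonstration that, within this paper's framework, Theorem~\ref{thm:Gordonmesh2} carries no independent probabilistic content: it is a soft corollary of Theorem~\ref{thm:Gordonmesh1}, obtained by specializing the thresholds and integrating. That makes the logical dependency structure cleaner than in the paper, where the two theorems are presented as parallel citations; the only thing your route cannot replace is the proof of Theorem~\ref{thm:Gordonmesh1} itself, which remains the genuinely Gaussian ingredient.
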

When $m=1$ both of the above theorems simplify to what is called Slepian's lemma (see, e.g. \cite{Slep62}). In fact, to be completely chronologically exact, the two above theorems actually extended the Slepian's lemma which, for the completeness, we include below in the form of two theorems that are effective analogues to Theorems \ref{thm:Gordonmesh1} and \ref{thm:Gordonmesh2}.
\begin{theorem}(\cite{Slep62,Gordon88})
\label{thm:Slepian1} Let $X_{i}$ and $Y_{i}$, $1\leq i\leq n$, be two centered Gaussian processes which satisfy the following inequalities for all choices of indices
\begin{enumerate}
\item $E(X_{i}^2)=E(Y_{i}^2)$
\item $E(X_{i}X_{l})\leq E(Y_{i}Y_{l}), i\neq l$.
\end{enumerate}
Then
\begin{equation*}
P(\bigcap_{i}(X_{i}\geq \lambda_{i}))\leq P(\bigcap_{i}(Y_{i}\geq \lambda_{i}))
\Leftrightarrow P(\bigcup_{i}(X_{i}\geq \lambda_{i}))\leq P(\bigcup_{i}(Y_{i}\geq \lambda_{i})).
\end{equation*}
\end{theorem}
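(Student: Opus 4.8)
The statement is Slepian's lemma, and the quickest route is to observe that its intersection form is exactly the $m=1$ specialization of Theorem~\ref{thm:Gordonmesh1}: with a single column index the inner union collapses and condition~2 there becomes vacuous, leaving precisely conditions~1 and~2 of the present statement. For a self-contained argument, however, the plan is to prove
\[
P\Big(\bigcap_i (X_i\ge \lambda_i)\Big)\le P\Big(\bigcap_i (Y_i\ge \lambda_i)\Big)
\]
by the Gaussian interpolation (``smart path'') method and then to pass to the union form by complementation and reflection symmetry. First I would realize $X$ and $Y$ as \emph{independent} centered Gaussian vectors on one probability space (harmless, since the two compared probabilities depend only on the law of $X$ and the law of $Y$ separately), and introduce the interpolating process $Z_i(t)=\sqrt{1-t}\,X_i+\sqrt{t}\,Y_i$, $t\in[0,1]$, so that $Z(0)=X$ and $Z(1)=Y$.

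By independence the covariances move linearly, $\Sigma_{ij}(t):=E(Z_i(t)Z_j(t))=(1-t)E(X_iX_j)+tE(Y_iY_j)$, so condition~1 gives $\dot\Sigma_{ii}(t)=E(Y_i^2)-E(X_i^2)=0$ while condition~2 gives $\dot\Sigma_{ij}(t)=E(Y_iY_j)-E(X_iX_j)\ge 0$ for $i\ne j$. Since the indicator of the orthant $\{z:z_i\ge\lambda_i\ \forall i\}$ is not smooth, I would approximate it by a product $f(z)=\prod_i h_i(z_i)$ with each $h_i$ smooth, bounded, and nondecreasing (so $h_i\ge 0$ and $h_i'\ge 0$). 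Writing $\phi(t)=E[f(Z(t))]$ and applying Gaussian integration by parts coordinatewise yields the interpolation identity
\[
\frac{d}{dt}\phi(t)=\frac12\sum_{i,j}\dot\Sigma_{ij}(t)\,E\!\left[\frac{\partial^2 f}{\partial z_i\partial z_j}(Z(t))\right].
\]
The diagonal terms vanish because $\dot\Sigma_{ii}=0$, and for $i\ne j$ the mixed derivative factorizes as $h_i'(z_i)h_j'(z_j)\prod_{k\ne i,j}h_k(z_k)\ge 0$; together with $\dot\Sigma_{ij}\ge 0$ this makes every summand nonnegative, so $\phi'\ge 0$ and $\phi(0)\le\phi(1)$. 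Taking a sequence of such approximants converging pointwise to the indicators and passing to the limit by dominated convergence (the boundary events $\{Z_i(t)=\lambda_i\}$ being null for non-degenerate Gaussians) delivers the intersection inequality.

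The companion union statement then follows by duality: $\bigcup_i(X_i\ge\lambda_i)$ is the complement of $\bigcap_i(X_i<\lambda_i)$, and the reflected vectors $-X,-Y$ again satisfy conditions~1 and~2 (reflection leaves every covariance unchanged); applying the intersection inequality to $-X,-Y$ at thresholds $-\lambda_i$ and taking complements converts it into the corresponding union statement, establishing the stated equivalence.

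I expect the one genuine obstacle to be the analytic justification of the displayed interpolation formula and of the smoothing limit: differentiating under the expectation requires the Gaussian integration-by-parts identity together with integrability bounds uniform in $t\in(0,1)$ (the factors $1/\sqrt{t}$ and $1/\sqrt{1-t}$ arising from $\dot Z$ must be absorbed), and one must check that the smooth approximants $h_i$ can be passed to the limit without affecting the inequality. Once this analytic core is secured, the sign bookkeeping and the reflection duality are routine.
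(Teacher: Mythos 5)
The paper never actually proves this theorem --- it is quoted from \cite{Slep62,Gordon88} and used as a black box --- so there is no internal proof to compare against; judged on its own, your interpolation argument for the intersection inequality is the standard one and is sound. Realizing $X$ and $Y$ as independent vectors, moving along $Z(t)=\sqrt{1-t}\,X+\sqrt{t}\,Y$, using the identity $\phi'(t)=\tfrac12\sum_{i,j}\dot\Sigma_{ij}(t)E[\partial^2_{ij}f(Z(t))]$ with vanishing diagonal terms and nonnegative off-diagonal terms (mixed partials of a product of bounded nondecreasing functions are nonnegative), and then removing the smoothing, does give $P(\bigcap_i(X_i\ge\lambda_i))\le P(\bigcap_i(Y_i\ge\lambda_i))$. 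Your remark that this intersection form is the $m=1$ specialization of Theorem~\ref{thm:Gordonmesh1} is also correct, and is in fact how the paper itself frames the connection.

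The gap is in your last step. The reflection/complementation duality, carried out honestly, produces
\begin{equation*}
P\Bigl(\bigcup_i (X_i\ge \lambda_i)\Bigr)\;\ge\; P\Bigl(\bigcup_i (Y_i\ge \lambda_i)\Bigr),
\end{equation*}
i.e.\ the union inequality with the direction \emph{reversed} relative to the intersection one: applying the intersection inequality to $(-X,-Y)$ at thresholds $(-\lambda_i)$ gives $P(\bigcap_i(X_i\le\lambda_i))\le P(\bigcap_i(Y_i\le\lambda_i))$, and taking complements flips the inequality. It cannot come out any other way: for $n=2$ take $X_1,X_2$ independent standard normals, $Y_1=Y_2$ a single standard normal, and $\lambda_1=\lambda_2=0$; the hypotheses hold, yet $P(\bigcup_i(X_i\ge0))=3/4>1/2=P(\bigcup_i(Y_i\ge0))$. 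So the union inequality as printed in the theorem is false as stated --- evidently a misprint in the paper, consistent with the direction flip between $\min$ and $\max$ in Theorem~\ref{thm:Slepian2} and with the paper's actual applications (e.g.\ Lemma~\ref{lemma:posproblemma}), which place the larger union probability on the less correlated process. By declaring that the duality ``establishes the stated equivalence'' you silently matched a wrong sign; a correct writeup must either state the union form with $\ge$ or explicitly flag the misprint. With that single correction, your proof is complete and is the standard self-contained argument.
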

The following, more simpler, version of the above theorem relates to the expected values.
\begin{theorem}(\cite{Slep62,Gordon88})
\label{thm:Slepian2} Let $X_{i}$ and $Y_{i}$, $1\leq i\leq n$, be two centered Gaussian processes which satisfy the following inequalities for all choices of indices
\begin{enumerate}
\item $E(X_{i}^2)=E(Y_{i}^2)$
\item $E(X_{i}X_{l})\leq E(Y_{i}Y_{l}), i\neq l$.
\end{enumerate}
Then
\begin{equation*}
E(\min_{i}(X_{i}))\leq E(\min_i(Y_{i})) \Leftrightarrow E(\max_{i}(X_{i}))\geq E(\max_i(Y_{i})).
\end{equation*}
\end{theorem}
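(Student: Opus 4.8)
The plan is to reduce the statement to its cleanest essential form and then prove the core comparison by Gaussian interpolation. First I would dispose of the equivalence $E(\min_i X_i)\le E(\min_i Y_i)\Leftrightarrow E(\max_i X_i)\ge E(\max_i Y_i)$: replacing $X_i$ by $-X_i$ and $Y_i$ by $-Y_i$ leaves hypotheses 1 and 2 intact (variances are unchanged and $E((-X_i)(-X_l))=E(X_iX_l)$), while $\min_i Z_i=-\max_i(-Z_i)$; hence the two assertions are literally the same statement applied to the negated processes, and it suffices to prove $E(\max_i X_i)\ge E(\max_i Y_i)$. I also note that this is exactly the $m=1$ specialization of Theorem \ref{thm:Gordonmesh2} (a single column makes $\max_j$ trivial and collapses conditions 2--3 to the single hypothesis 2 here), so one legitimate route is simply to invoke that theorem; below I instead sketch the self-contained interpolation argument that underlies it.

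The main step is a smart-path (interpolation) computation. Assume $X$ and $Y$ are realized independently on a common probability space and set $Z_i(t)=\sqrt{t}\,X_i+\sqrt{1-t}\,Y_i$ for $t\in[0,1]$, so that $Z(0)=Y$ and $Z(1)=X$; by hypothesis 1 each $Z_i(t)$ has the common variance $E(X_i^2)=E(Y_i^2)$, while $E(Z_i(t)Z_l(t))=tE(X_iX_l)+(1-t)E(Y_iY_l)$. To differentiate I replace the nonsmooth maximum by the soft-max $f_\beta(z)=\beta^{-1}\log\sum_i e^{\beta z_i}$, which satisfies $f_\beta(z)\to\max_i z_i$ as $\beta\to\infty$, and study $\phi(t)=E f_\beta(Z(t))$. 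Differentiating in $t$ and applying Gaussian integration by parts (Stein's identity) to each resulting term, using $E(X_iZ_k(t))=\sqrt{t}\,E(X_iX_k)$ and $E(Y_iZ_k(t))=\sqrt{1-t}\,E(Y_iY_k)$, the half-powers cancel and I expect to arrive at
\[
\phi'(t)=\tfrac12\sum_{i,k}\big(E(X_iX_k)-E(Y_iY_k)\big)\,E\big(\partial^2_{ik}f_\beta(Z(t))\big).
\]

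The proof then closes by a sign analysis together with the two hypotheses. The diagonal terms $i=k$ drop out because $E(X_i^2)-E(Y_i^2)=0$ by hypothesis 1 --- this is precisely where the equal-variance assumption is used. For the off-diagonal terms, a direct computation with $p_k=e^{\beta z_k}/\sum_j e^{\beta z_j}$ gives $\partial^2_{ik}f_\beta=-\beta p_ip_k\le 0$ when $i\neq k$, while hypothesis 2 gives $E(X_iX_k)-E(Y_iY_k)\le 0$; the product of two nonpositive factors is nonnegative, so $\phi'(t)\ge 0$ and $\phi(1)\ge\phi(0)$, i.e. $E f_\beta(X)\ge E f_\beta(Y)$. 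Letting $\beta\to\infty$ yields $E(\max_i X_i)\ge E(\max_i Y_i)$, and the first paragraph converts this to the $\min$ form. I expect the main obstacle to be the analytic bookkeeping rather than any conceptual difficulty: justifying differentiation under the expectation and the validity of Gaussian integration by parts (which needs the mild integrability that the Gaussian tails supply), and controlling the passage $\beta\to\infty$ uniformly enough to transfer the inequality to the genuine maximum. The conceptual crux --- and the one place the hypotheses are doing real work --- is the simultaneous sign pairing of $\partial^2_{ik}f_\beta\le0$ with $E(X_iX_k)-E(Y_iY_k)\le0$ on the off-diagonal.
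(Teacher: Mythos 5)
The paper never proves this theorem: it is quoted directly from \cite{Slep62,Gordon88}, and the paper's only commentary is the remark that Theorems \ref{thm:Gordonmesh1} and \ref{thm:Gordonmesh2} collapse to Slepian's lemma when $m=1$. So your argument is necessarily a different route from the paper's (which is citation), and it is correct: it is the standard smart-path/interpolation proof. Your preliminary reduction is sound --- negating both processes preserves hypotheses 1 and 2 and swaps $\min$ and $\max$ via $\min_i Z_i=-\max_i(-Z_i)$, so the two inequalities in the conclusion are the same assertion applied to the negated pair --- and your alternative observation that the statement is the $m=1$ specialization of Theorem \ref{thm:Gordonmesh2} is exactly the paper's own remark. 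The computation at the heart of the interpolation also checks out: with $Z(t)=\sqrt{t}\,X+\sqrt{1-t}\,Y$ ($X,Y$ independent) and $f_\beta(z)=\beta^{-1}\log\sum_i e^{\beta z_i}$, Gaussian integration by parts cancels the factors $1/(2\sqrt{t})$ and $1/(2\sqrt{1-t})$ against $E(X_iZ_k(t))=\sqrt{t}\,E(X_iX_k)$ and $E(Y_iZ_k(t))=\sqrt{1-t}\,E(Y_iY_k)$, giving $\phi'(t)=\tfrac12\sum_{i,k}\bigl(E(X_iX_k)-E(Y_iY_k)\bigr)E\bigl(\partial^2_{ik}f_\beta(Z(t))\bigr)$; the diagonal vanishes precisely by hypothesis 1, the off-diagonal second derivatives are $-\beta p_ip_k\le 0$, and hypothesis 2 supplies the matching nonpositive factor, so $\phi'\ge 0$. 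The limit $\beta\to\infty$ is unproblematic because $\max_i z_i\le f_\beta(z)\le \max_i z_i+\beta^{-1}\log n$ is a deterministic, dimension-only error. What your route buys is a self-contained verification of the comparison principle on which the paper's upper and lower bounds (Lemmas \ref{lemma:posexplemma} and \ref{lemma:posexplemmalb}) rest; what the paper's citation buys is brevity and, importantly, the stronger probabilistic form (Theorem \ref{thm:Slepian1}), which your expectation-level interpolation does not by itself deliver.
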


Now, to create an upper-bounding strategy for the positive Hopfield form we will rely on Theorems \ref{thm:Slepian1} and Theorem \ref{thm:Slepian2}. We start by reformulating the problem in (\ref{eq:sqrtposham1}) in the following way
\begin{equation}
\xi_p=\max_{\x\in\{-\frac{1}{\sqrt{n}},\frac{1}{\sqrt{n}}\}^n}\max_{\|\y\|_2=1}\y^TH\x.\label{eq:sqrtposham2}
\end{equation}
We will first focus on the expected value of $\xi_p$ and then on its more general probabilistic properties. The following is then a direct application of Theorem \ref{thm:Slepian2}.
\begin{lemma}
Let $H$ be an $m\times n$ matrix with i.i.d. standard normal components. Let $\g$ and $\h$ be $m\times 1$ and $n\times 1$ vectors, respectively, with i.i.d. standard normal components. Also, let $g$ be a standard normal random variable. Then
\begin{equation}
E(\max_{\x\in\{-\frac{1}{\sqrt{n}},\frac{1}{\sqrt{n}}\}^n,\|\y\|_2=1}(\y^T H\x +\|\x\|_2 g))\leq E(\max_{\x\in\{-\frac{1}{\sqrt{n}},\frac{1}{\sqrt{n}}\}^n,\|\y\|_2=1}(\|\x\|_2\g^T\y+\h^T\x)).\label{eq:posexplemma}
\end{equation}\label{lemma:posexplemma}
\end{lemma}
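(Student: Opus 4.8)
The plan is to recognize the claim as an instance of Gordon's comparison inequality for expected maxima, Theorem \ref{thm:Slepian2}, applied to a pair of centered Gaussian processes indexed by the pair $(\x,\y)$ ranging over $\{-\frac{1}{\sqrt{n}},\frac{1}{\sqrt{n}}\}^n\times\{\y:\|\y\|_2=1\}$. To line up with the direction of the theorem's conclusion, I would let the process appearing on the right-hand side of (\ref{eq:posexplemma}) play the role of $X$ and the one on the left-hand side the role of $Y$; concretely, set $X_{\x,\y}=\|\x\|_2\g^T\y+\h^T\x$ and $Y_{\x,\y}=\y^TH\x+\|\x\|_2 g$. Since both are centered Gaussian, the conclusion $E(\max X)\geq E(\max Y)$ of Theorem \ref{thm:Slepian2} is then exactly (\ref{eq:posexplemma}), and the whole task reduces to checking its two hypotheses.

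First I would verify hypothesis (1), equality of variances. Using that $H$ has i.i.d. standard normal entries and that $g$ (respectively $\g,\h$) is independent of everything else, the cross terms vanish and one gets $E(Y_{\x,\y}^2)=\|\y\|_2^2\|\x\|_2^2+\|\x\|_2^2=2\|\x\|_2^2$ and $E(X_{\x,\y}^2)=\|\x\|_2^2\|\y\|_2^2+\|\x\|_2^2=2\|\x\|_2^2$, where I use $\|\y\|_2=1$; indeed every feasible $\x$ has $\|\x\|_2=1$, so both variances simply equal $2$. Next, for two distinct indices $(\x,\y)$ and $(\x',\y')$ I would compute the covariances from $E(H_{ij}H_{kl})=\delta_{ik}\delta_{jl}$, obtaining $E(Y_{\x,\y}Y_{\x',\y'})=(\y^T\y')(\x^T\x')+\|\x\|_2\|\x'\|_2$ and $E(X_{\x,\y}X_{\x',\y'})=\|\x\|_2\|\x'\|_2(\y^T\y')+\x^T\x'$. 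Hypothesis (2) demands $E(X_{\x,\y}X_{\x',\y'})\leq E(Y_{\x,\y}Y_{\x',\y'})$, which after rearranging is the single scalar inequality $(\y^T\y'-1)(\x^T\x'-\|\x\|_2\|\x'\|_2)\geq0$. This holds because, by Cauchy--Schwarz together with $\|\y\|_2=\|\y'\|_2=1$, both factors are nonpositive, so their product is nonnegative.

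With both hypotheses of Theorem \ref{thm:Slepian2} established, its conclusion delivers (\ref{eq:posexplemma}). The one genuine technical point, which I expect to be the main obstacle, is that the index set is not finite: while $\x$ ranges over the finite cube, $\y$ ranges over the entire unit sphere, whereas Theorem \ref{thm:Slepian2} is literally stated for finitely many variables. I would dispatch this in the standard way, replacing the sphere by a finite $\epsilon$-net, applying the comparison on the resulting finite index set, and then passing to the limit using the almost-sure continuity of the maps $\y\mapsto\y^TH\x$ and $\y\mapsto\g^T\y$ and the fact that both maxima are attained; the inequality is preserved in the limit. Alternatively, one may simply invoke the separable-process version of Gordon's inequality, which applies directly to the continuous index set.
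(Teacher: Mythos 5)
Your proposal is correct and takes essentially the same route as the paper's own proof: you define the identical pair of comparison processes, obtain the same variances $2\|\x\|_2^2=2$ and the same covariance gap, which you factor (equivalently to the paper) as $\bigl(\|\x\|_2\|\x'\|_2-\x^T\x'\bigr)\bigl(1-\y^T\y'\bigr)\geq 0$, and then invoke Theorem \ref{thm:Slepian2} in the same direction. Your closing remark about the infinite index set (the $\epsilon$-net or separable-process version) addresses a point the paper's sketch silently glosses over, but it is a refinement of the same argument rather than a different approach.
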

\begin{proof}
As mentioned above, the proof is a standard/direct application of Theorem \ref{thm:Slepian2}. We will sketch it for completeness. Namely, one starts by defining processes $X_i$ and $Y_i$ in the following way
\begin{equation}
Y_i=(\y^{(i)})^T H\x^{(i)} +\|\x^{(i)}\|_2 g\quad X_i=\|\x^{(i)}\|_2\g^T\y^{(i)}+\h^T\x^{(i)}.\label{eq:posexplemmaproof1}
\end{equation}
Then clearly
\begin{equation}
EY_i^2=EX_i^2=2\|\x^{(i)}\|_2^2=2.\label{eq:posexplemmaproof2}
\end{equation}
One then further has
\begin{eqnarray}
EY_iY_l & = & (\y^{(i)})^T\y^{(l)}(\x^{(l)})^T\x^{(i)}+\|\x^{(i)}\|_2\|\x^{(l)}\|_2 \nonumber \\
EX_iX_l & = & (\y^{(i)})^T\y^{(l)}\|\x^{(i)}\|_2\|\x^{(l)}\|_2+(\x^{(l)})^T\x^{(i)}.\label{eq:posexplemmaproof3}
\end{eqnarray}
And after a small algebraic transformation
\begin{eqnarray}
EY_iY_l-EX_iX_l & = & \|\x^{(i)}\|_2\|\x^{(l)}\|_2(1-(\y^{(i)})^T\y^{(l)})-(\x^{(l)})^T\x^{(i)}(1-(\y^{(i)})^T\y^{(l)}) \nonumber \\
& = & (\|\x^{(i)}\|_2\|\x^{(l)}\|_2-(\x^{(l)})^T\x^{(i)})(1-(\y^{(i)})^T\y^{(l)})\nonumber \\
& \geq & 0.\label{eq:posexplemmaproof4}
\end{eqnarray}
Combining (\ref{eq:posexplemmaproof2}) and (\ref{eq:posexplemmaproof4}) and using results of Theorem \ref{thm:Slepian2} one then easily obtains (\ref{eq:posexplemma}).
\end{proof}

Using results of Lemma \ref{lemma:posexplemma} we then have
\begin{multline}
E(\max_{\x\in\{-\frac{1}{\sqrt{n}},\frac{1}{\sqrt{n}}\}^n} \|H\x\|_2) =E(\max_{\x\in\{-\frac{1}{\sqrt{n}},\frac{1}{\sqrt{n}}\}^n,\|\y\|_2=1}(\y^T H\x +\|\x\|_2 g))\\\leq E(\max_{\x\in\{-\frac{1}{\sqrt{n}},\frac{1}{\sqrt{n}}\}^n,\|\y\|_2=1}(\|\x\|_2\g^T\y+\h^T\x))=E\|\x\|_2\|\g\|_2+E\sum_{i=1}^{n}|\h_i|\leq \sqrt{m}+\sqrt{\frac{2}{\pi}}\sqrt{n}.\label{eq:poshopaftlemma2}
\end{multline}
Connecting beginning and end of (\ref{eq:poshopaftlemma2}) we finally have an upper bound on $E\xi_p$ from (\ref{eq:sqrtposham1}), i.e.
\begin{equation}
E\xi_p=E(\max_{\x\in\{-\frac{1}{\sqrt{n}},\frac{1}{\sqrt{n}}\}^n} \|H\x\|_2) \leq \sqrt{m}+\sqrt{\frac{2}{\pi}}\sqrt{n}=\sqrt{n}(\sqrt{\alpha}+\sqrt{\frac{2}{\pi}}),\label{eq:poshopubexp}
\end{equation}
or in a scaled (possibly) more convenient form
\begin{equation}
\frac{E\xi_p}{\sqrt{n}}=\frac{E(\max_{\x\in\{-\frac{1}{\sqrt{n}},\frac{1}{\sqrt{n}}\}^n} \|H\x\|_2)}{\sqrt{n}} \leq \sqrt{\alpha}+\sqrt{\frac{2}{\pi}}.\label{eq:poshopubexp1}
\end{equation}

We now turn to deriving a more general probabilistic result related to $\xi_p$. Before doing so we do mention that since the ground state energies will concentrate in thermodynamic limit (more on a much more general approach in this direction can be found in e.g. \cite{GiuGen12}), their expected values considered above are typically the hardest thing to study. In that regard the probabilistic results that we will present below may not be viewed as important. However, although here for the easiness of the exposition we often assume a large $n$ scenario many of the concepts that we present work just fine even for finite $n$. One should then keep in mind that the strategy we present below has an importance attached to it that goes beyond a likelihood type generalization of the above studied means.

Now, we will present this more general probabilistic estimate of the ground state energy through the following lemma.
\begin{lemma}
Let $H$ be an $m\times n$ matrix with i.i.d. standard normal components. Let $\g$ and $\h$ be $m\times 1$ and $n\times 1$ vectors, respectively, with i.i.d. standard normal components. Also, let $g$ be a standard normal random variable and let $\zeta_{\x}$ be a function of $\x$. Then
\begin{equation}
P(\max_{\x\in\{-\frac{1}{\sqrt{n}},\frac{1}{\sqrt{n}}\}^n\|\y\|_2=1}(\y^T H\x +\|\x\|_2 g-\zeta_{\x})\geq 0)\leq
P(\max_{\x\in\{-\frac{1}{\sqrt{n}},\frac{1}{\sqrt{n}}\}^n\|\y\|_2=1}(\|\x\|_2\g^T\y+\h^T\x-\zeta_{\x})\geq 0).\label{eq:posproblemma}
\end{equation}\label{lemma:posproblemma}
\end{lemma}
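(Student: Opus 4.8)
The plan is to rerun the argument of Lemma \ref{lemma:posexplemma} essentially verbatim, replacing the expectation comparison of Theorem \ref{thm:Slepian2} by its probabilistic counterpart, Theorem \ref{thm:Slepian1}. Concretely, I would reuse the very same two centered Gaussian families indexed by the pairs $(\x^{(i)},\y^{(i)})$,
\begin{equation*}
Y_i=(\y^{(i)})^T H\x^{(i)} +\|\x^{(i)}\|_2 g,\qquad X_i=\|\x^{(i)}\|_2\g^T\y^{(i)}+\h^T\x^{(i)},
\end{equation*}
and note that all the covariance bookkeeping required by Theorem \ref{thm:Slepian1} has already been done in (\ref{eq:posexplemmaproof2})--(\ref{eq:posexplemmaproof4}). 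Indeed, $EX_i^2=EY_i^2=2$ and
\begin{equation*}
EY_iY_l-EX_iX_l=(\|\x^{(i)}\|_2\|\x^{(l)}\|_2-(\x^{(l)})^T\x^{(i)})(1-(\y^{(i)})^T\y^{(l)})\geq 0,
\end{equation*}
so that $EX_iX_l\leq EY_iY_l$ for $i\neq l$, which are exactly the hypotheses of Theorem \ref{thm:Slepian1}.

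Next I would specialize the thresholds in the union form of Theorem \ref{thm:Slepian1} to $\lambda_i=\zeta_{\x^{(i)}}$. With this choice the event $\bigcup_i(Y_i\geq \lambda_i)$ is precisely $\{\max_i(Y_i-\zeta_{\x^{(i)}})\geq 0\}$, i.e.\ the event on the left-hand side of (\ref{eq:posproblemma}), and likewise $\bigcup_i(X_i\geq\lambda_i)$ is the event on the right-hand side. Applying the union part of Theorem \ref{thm:Slepian1}, with the less-correlated family $X$ (the $\g,\h$ process) playing the role of the comparison process, then gives $P(\bigcup_i(Y_i\geq\lambda_i))\leq P(\bigcup_i(X_i\geq\lambda_i))$, which is exactly (\ref{eq:posproblemma}). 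The one point to verify carefully is the direction: the hypothesis $EX_iX_l\leq EY_iY_l$ makes $X$ the family whose maximum (equivalently, whose union event) is the larger of the two, which is consistent with the expectation ordering $E(\max Y_i)\leq E(\max X_i)$ already established in Lemma \ref{lemma:posexplemma}.

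The main obstacle is not the comparison itself but the fact that the auxiliary variable $\y$ ranges over the entire unit sphere, whereas Theorem \ref{thm:Slepian1} is stated for a finite index set. This is the same technical gap that is silently bridged in Lemma \ref{lemma:posexplemma}, and I would close it in the standard way: replace the sphere by a finite $\epsilon$-net, apply Theorem \ref{thm:Slepian1} to the resulting finite collection of pairs $(\x^{(i)},\y^{(i)})$, and then let the net refine. Since each process is almost surely continuous in $\y$ and the sphere is compact, the net-maxima increase to the corresponding suprema and the events $\{\text{net-max}\geq 0\}$ exhaust $\{\sup\geq 0\}$, so continuity of probability along this monotone limit transfers the finite-index inequality to the continuous statement (\ref{eq:posproblemma}). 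I expect this limiting argument, rather than the Slepian step, to be the only place where genuine (if entirely routine) analysis is needed.
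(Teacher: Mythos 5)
Your proposal is correct and follows essentially the same route as the paper: the paper's own proof simply states that one repeats the argument of Lemma \ref{lemma:posexplemma} with the same processes $X_i,Y_i$ and the same covariance computations, invoking Theorem \ref{thm:Slepian1} (with thresholds $\lambda_i=\zeta_{\x^{(i)}}$) in place of Theorem \ref{thm:Slepian2}. Your additional care about the direction of the comparison (the less-correlated $\g,\h$ process having the stochastically larger maximum) and the $\epsilon$-net passage from a finite index set to the sphere merely fills in details the paper leaves implicit in both lemmas.
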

\begin{proof}
The proof is basically same as the proof of Lemma \ref{lemma:posexplemma}. The only difference is that instead of Theorem \ref{thm:Slepian2} it relies on Theorem \ref{thm:Slepian1}.
\end{proof}

Let $\zeta_{\x}=-\epsilon_{5}^{(g)}\sqrt{n}\|\x\|_2+\xi_p^{(u)}$ with $\epsilon_{5}^{(g)}>0$ being an arbitrarily small constant independent of $n$. We will first look at the right-hand side of the inequality in (\ref{eq:posproblemma}). The following is then the probability of interest
\begin{equation}
P(\max_{\x\in\{-\frac{1}{\sqrt{n}},\frac{1}{\sqrt{n}}\}^n\|\y\|_2=1}(\|\x\|_2\g^T\y+\h^T\x+\epsilon_{5}^{(g)}\sqrt{n}\|\x\|_2)\geq \xi_p^{(u)}).\label{eq:probanal0}
\end{equation}
After solving the maximization over $\x$ and $\y$ one obtains
\begin{equation}
\hspace{-.3in}P(\max_{\x\in\{-\frac{1}{\sqrt{n}},\frac{1}{\sqrt{n}}\}^n\|\y\|_2=1}(\|\x\|_2\g^T\y+\h^T\x+\epsilon_{5}^{(g)}\sqrt{n}\|\x\|_2)\geq \xi_p^{(u)})=P(\|\g\|_2+\sum_{i=1}^{n}|\h_i|/\sqrt{n}+\epsilon_{5}^{(g)}\sqrt{n}\geq \xi_p^{(u)}).\label{eq:probanal1}
\end{equation}
Since $\g$ is a vector of $m$ i.i.d. standard normal variables it is rather trivial that $P(\|\g\|_2<(1+\epsilon_{1}^{(m)})\sqrt{m})\geq 1-e^{-\epsilon_{2}^{(m)} m}$ where $\epsilon_{1}^{(m)}>0$ is an arbitrarily small constant and $\epsilon_{2}^{(m)}$ is a constant dependent on $\epsilon_{1}^{(m)}$ but independent of $n$. Along the same lines, since $\h$ is a vector of $n$ i.i.d. standard normal variables it is rather trivial that $P(|\h|<(1+\epsilon_{1}^{(n)})n)\geq 1-e^{-\epsilon_{2}^{(n)} n}$ where $\epsilon_{1}^{(n)}>0$ is an arbitrarily small constant and $\epsilon_{2}^{(n)}$ is a constant dependent on $\epsilon_{1}^{(n)}$ but independent of $n$. Then from (\ref{eq:probanal1}) one obtains
\begin{multline}
P(\max_{\x\in\{-\frac{1}{\sqrt{n}},\frac{1}{\sqrt{n}}\}^n\|\y\|_2=1}(\|\x\|_2\g^T\y+\h^T\x+\epsilon_{5}^{(g)}\sqrt{n}\|\x\|_2)\geq \xi_p^{(u)})\\\leq
(1-e^{-\epsilon_{2}^{(m)} m})(1-e^{-\epsilon_{2}^{(n)} n})
P((1+\epsilon_{1}^{(m)})\sqrt{m}+(1+\epsilon_{1}^{(n)})\sqrt{n}\sqrt{\frac{2}{\pi}}+\epsilon_{5}^{(g)}\sqrt{n}\geq \xi_p^{(u)})
+e^{-\epsilon_{2}^{(m)} m}+e^{-\epsilon_{2}^{(n)} n}.\label{eq:probanal2}
\end{multline}
If
\begin{eqnarray}
& & (1+\epsilon_{1}^{(m)})\sqrt{m}+(1+\epsilon_{1}^{(n)})\sqrt{n}\sqrt{\frac{2}{\pi}}+\epsilon_{5}^{(g)}\sqrt{n}<\xi_p^{(u)}\nonumber \\
& \Leftrightarrow & (1+\epsilon_{1}^{(m)})\sqrt{\alpha}+(1+\epsilon_{1}^{(n)})\sqrt{\frac{2}{\pi}}+\epsilon_{5}^{(g)}<\frac{\xi_p^{(u)}}{\sqrt{n}},\label{eq:condxipu}
\end{eqnarray}
one then has from (\ref{eq:probanal2})
\begin{equation}
\lim_{n\rightarrow\infty}P(\max_{\x\in\{-\frac{1}{\sqrt{n}},\frac{1}{\sqrt{n}}\}^n\|\y\|_2=1}(\|\x\|_2\g^T\y+\h^T\x+\epsilon_{5}^{(g)}\sqrt{n}\|\x\|_2)\geq \xi_p^{(u)})\leq 0.\label{eq:probanal3}
\end{equation}

We will now look at the left-hand side of the inequality in (\ref{eq:posproblemma}). The following is then the probability of interest
\begin{equation}
P(\max_{\x\in\{-\frac{1}{\sqrt{n}},\frac{1}{\sqrt{n}}\}^n\|\y\|_2=1}(\y^TH\x+\|\x\|_2g+\epsilon_{5}^{(g)}\sqrt{n}\|\x\|_2-\xi_p^{(u)})\geq 0).\label{eq:leftprobanal0}
\end{equation}
Since $P(g\geq -\epsilon_{5}^{(g)}\sqrt{n})\geq 1-e^{-\epsilon_{6}^{(g)} n}$ (where $\epsilon_{6}^{(g)}$ is, as all other $\epsilon$'s in this paper are, independent of $n$) from (\ref{eq:leftprobanal0}) we have
\begin{multline}
P(\max_{\x\in\{-\frac{1}{\sqrt{n}},\frac{1}{\sqrt{n}}\}^n\|\y\|_2=1}(\y^TH\x+\|\x\|_2g+\epsilon_{5}^{(g)}\sqrt{n}\|\x\|_2-\xi_p^{(u)})\geq 0)
\\\geq (1-e^{-\epsilon_{6}^{(g)} n})P(\max_{\x\in\{-\frac{1}{\sqrt{n}},\frac{1}{\sqrt{n}}\}^n\|\y\|_2=1}(\y^TH\x-\xi_p^{(u)})\geq 0).\label{eq:leftprobanal1}
\end{multline}
When $n$ is large from (\ref{eq:leftprobanal1}) we then have
\begin{multline}
\hspace{-.4in}\lim_{n\rightarrow \infty}P(\max_{\x\in\{-\frac{1}{\sqrt{n}},\frac{1}{\sqrt{n}}\}^n\|\y\|_2=1}(\y^TH\x+\|\x\|_2g+\epsilon_{5}^{(g)}\sqrt{n}\|\x\|_2-\xi_p^{(u)})\geq 0)
\geq  \lim_{n\rightarrow\infty}P(\max_{\x\in\{-\frac{1}{\sqrt{n}},\frac{1}{\sqrt{n}}\}^n\|\y\|_2=1}(\y^TH\x-\xi_p^{(u)})\geq 0)\\
 =  \lim_{n\rightarrow\infty}P(\max_{\x\in\{-\frac{1}{\sqrt{n}},\frac{1}{\sqrt{n}}\}^n\|\y\|_2=1}(\y^TH\x)\geq \xi_p^{(u)})
 =  \lim_{n\rightarrow\infty}P(\max_{\x\in\{-\frac{1}{\sqrt{n}},\frac{1}{\sqrt{n}}\}^n}(\|H\x\|_2)\geq \xi_p^{(u)}).\label{eq:leftprobanal2}
\end{multline}
Assuming that (\ref{eq:condxipu}) holds, then a combination of (\ref{eq:posproblemma}), (\ref{eq:probanal3}), and (\ref{eq:leftprobanal2}) gives
\begin{equation}
 \lim_{n\rightarrow\infty}P(\max_{\x\in\{-\frac{1}{\sqrt{n}},\frac{1}{\sqrt{n}}\}^n}(\|H\x\|_2)\geq \xi_p^{(u)})\leq \lim_{n\rightarrow\infty}P(\max_{\x\in\{-\frac{1}{\sqrt{n}},\frac{1}{\sqrt{n}}\}^n\|\y\|_2=1}(\|\x\|_2\g^T\y+\h^T\x+\epsilon_{5}^{(g)}\sqrt{n}\|\x\|_2)\geq \xi_p^{(u)})\leq 0.\label{eq:leftprobanal3}
\end{equation}

We summarize our results from this subsection in the following lemma.

\begin{lemma}
Let $H$ be an $m\times n$ matrix with i.i.d. standard normal components. Let $n$ be large and let $m=\alpha n$, where $\alpha>0$ is a constant independent of $n$. Let $\xi_p$ be as in (\ref{eq:sqrtposham1}). Let all $\epsilon$'s be arbitrarily small constants independent of $n$ and let $\xi_p^{(u)}$ be a scalar such that
\begin{equation}
(1+\epsilon_{1}^{(m)})\sqrt{\alpha}+(1+\epsilon_{1}^{(n)})\sqrt{\frac{2}{\pi}}+\epsilon_{5}^{(g)}<\frac{\xi_p^{(u)}}{\sqrt{n}}.\label{eq:condxipuposgenlemma}
\end{equation}
Then
\begin{eqnarray}
& & \lim_{n\rightarrow\infty}P(\max_{\x\in\{-\frac{1}{\sqrt{n}},\frac{1}{\sqrt{n}}\}^n}(\|H\x\|_2)\leq \xi_p^{(u)})\geq 1\nonumber \\
& \Leftrightarrow & \lim_{n\rightarrow\infty}P(\xi_p\leq \xi_p^{(u)})\geq 1 \nonumber \\
& \Leftrightarrow & \lim_{n\rightarrow\infty}P(\xi_p^2\leq (\xi_p^{(u)})^2)\geq 1, \label{eq:posgenproblemma}
\end{eqnarray}
and
\begin{equation}
\frac{E\xi_p}{\sqrt{n}}=\frac{E(\max_{\x\in\{-\frac{1}{\sqrt{n}},\frac{1}{\sqrt{n}}\}^n} \|H\x\|_2)}{\sqrt{n}} \leq \sqrt{\alpha}+\sqrt{\frac{2}{\pi}}.\label{eq:posgenexplemma}
\end{equation}
\label{lemma:posgenlemma}
\end{lemma}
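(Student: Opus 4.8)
The plan is to observe that this lemma simply packages the two bounds --- one on the expectation, one in probability --- that the subsection has already assembled, so the proof reduces to invoking the right earlier results and combining them. I would split the argument accordingly: first establish the expectation bound (\ref{eq:posgenexplemma}), then the probabilistic bound (\ref{eq:posgenproblemma}), and finally observe that the three probabilistic statements are trivially equivalent.

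For the expectation bound I would start from Lemma \ref{lemma:posexplemma}, which gives $E(\max_{\x,\|\y\|_2=1}(\y^TH\x+\|\x\|_2 g))\leq E(\max_{\x,\|\y\|_2=1}(\|\x\|_2\g^T\y+\h^T\x))$. Since $g$ is a zero-mean independent normal, the term $\|\x\|_2 g$ contributes nothing in expectation, so the left side equals $E\xi_p$. On the right side I would carry out the two maximizations explicitly: the inner maximization over $\|\y\|_2=1$ returns $\|\x\|_2\|\g\|_2$, and because every feasible $\x$ has $\|\x\|_2=1$, the maximization over the binary cube returns $\|\g\|_2+\sum_{i=1}^n|\h_i|/\sqrt{n}$. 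Taking expectations and using $E\|\g\|_2\leq\sqrt{E\|\g\|_2^2}=\sqrt{m}$ (Jensen) together with $E|\h_i|=\sqrt{2/\pi}$ yields $E\xi_p\leq\sqrt{m}+\sqrt{2/\pi}\,\sqrt{n}$, which after scaling by $\sqrt{n}$ is exactly (\ref{eq:posgenexplemma}); this is precisely the chain (\ref{eq:poshopaftlemma2})--(\ref{eq:poshopubexp1}).

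For the probabilistic bound I would apply Lemma \ref{lemma:posproblemma} with the choice $\zeta_\x=-\epsilon_5^{(g)}\sqrt{n}\|\x\|_2+\xi_p^{(u)}$ and analyze the two sides separately. On the right side, solving the maximizations as above reduces the event to $\|\g\|_2+\sum_{i=1}^n|\h_i|/\sqrt{n}+\epsilon_5^{(g)}\sqrt{n}\geq\xi_p^{(u)}$; the standard concentration bounds $P(\|\g\|_2<(1+\epsilon_1^{(m)})\sqrt{m})\geq 1-e^{-\epsilon_2^{(m)}m}$ and $P(\sum_i|\h_i|<(1+\epsilon_1^{(n)})n\sqrt{2/\pi})\geq 1-e^{-\epsilon_2^{(n)}n}$, together with the hypothesis (\ref{eq:condxipuposgenlemma}), force this probability to $0$ as $n\to\infty$. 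On the left side I would use $P(g\geq-\epsilon_5^{(g)}\sqrt{n})\geq 1-e^{-\epsilon_6^{(g)}n}$ to strip off the $\|\x\|_2 g$ term and lower-bound the left-side probability by $(1-e^{-\epsilon_6^{(g)}n})P(\max_\x\|H\x\|_2\geq\xi_p^{(u)})$, whose limit is $\lim_n P(\xi_p\geq\xi_p^{(u)})$. Chaining through the direction supplied by Lemma \ref{lemma:posproblemma} then gives $\lim_n P(\xi_p\geq\xi_p^{(u)})\leq 0$, i.e. $\lim_n P(\xi_p\leq\xi_p^{(u)})\geq 1$.

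Finally, the three equivalent forms in (\ref{eq:posgenproblemma}) are immediate: $\max_\x\|H\x\|_2$ is by definition $\xi_p$, and since $\xi_p\geq 0$ while the hypothesis forces $\xi_p^{(u)}>0$, the events $\{\xi_p\leq\xi_p^{(u)}\}$ and $\{\xi_p^2\leq(\xi_p^{(u)})^2\}$ coincide. Because the genuine analytic content --- the Gaussian process comparison of Lemmas \ref{lemma:posexplemma}--\ref{lemma:posproblemma} and the concentration estimates --- has already been discharged, I do not expect a substantive obstacle; the only points needing care are tracking the reversed direction of the inequality through Lemma \ref{lemma:posproblemma} and ensuring the $\epsilon$'s in (\ref{eq:condxipuposgenlemma}) are genuinely free, so that the limiting bound $\sqrt{\alpha}+\sqrt{2/\pi}$ is actually attained.
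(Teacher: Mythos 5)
Your proposal is correct and follows essentially the same route as the paper: the paper's own proof simply cites the preceding discussion, namely the expectation chain culminating in (\ref{eq:poshopubexp1}) via Lemma \ref{lemma:posexplemma}, and the probabilistic argument culminating in (\ref{eq:leftprobanal3}) via Lemma \ref{lemma:posproblemma} with the identical choice $\zeta_{\x}=-\epsilon_{5}^{(g)}\sqrt{n}\|\x\|_2+\xi_p^{(u)}$ and the same concentration estimates. Your reconstruction, including the care about the direction of the comparison inequality and the nonnegativity of $\xi_p$ for the squared-form equivalence, matches the paper's intended argument.
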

\begin{proof}
The proof follows from the above discussion, (\ref{eq:poshopubexp1}), and (\ref{eq:leftprobanal3}).
\end{proof}

\subsection{Lower-bounding ground state energy of the positive Hopfield form}
\label{sec:poshoplb}

In this subsection we will create the corresponding lower-bound results. To create a lower-bounding strategy for the positive Hopfield form we will again (as in previous subsection) rely on Theorems \ref{thm:Slepian1} and \ref{thm:Slepian2}. We start by recalling that the problem of interest is the one in (\ref{eq:sqrtposham2}) and we rewrite it in the following way
\begin{equation}
\xi_p=\max_{\x\in\{-\frac{1}{\sqrt{n}},\frac{1}{\sqrt{n}}\}^n}\max_{\|\y\|_2=1}\y^TH\x.\label{eq:sqrtposham2lb}
\end{equation}
As in the previous subsection, we will first focus on the expected value of $\xi_p$ and then on its more general probabilistic properties. The following is then a direct application of Theorem \ref{thm:Slepian2}.
\begin{lemma}
Let $H$ be an $m\times n$ matrix with i.i.d. standard normal components. Let $H^{(1)}$ and $H^{(2)}$ be $m\times m$ and $n\times n$ matrices, respectively, with i.i.d. standard normal components. Then
\begin{equation}
E(\max_{\x\in\{-\frac{1}{\sqrt{n}},\frac{1}{\sqrt{n}}\}^n,\|\y\|_2=1}(\y^T H\x))\geq E(\max_{\x\in\{-\frac{1}{\sqrt{n}},\frac{1}{\sqrt{n}}\}^n,\|\y\|_2=1}(\frac{1}{\sqrt{2}}\y^TH^{(1)}\y+\frac{1}{\sqrt{2}}\x^TH^{(2)}\x)).\label{eq:posexplemmalb}
\end{equation}\label{lemma:posexplemmalb}
\end{lemma}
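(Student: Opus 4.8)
The plan is to prove this lower bound in exactly the same spirit as the upper-bounding Lemma \ref{lemma:posexplemma}, namely by a direct application of Theorem \ref{thm:Slepian2}, but now arranging the two processes so that the \emph{original} Hopfield bilinear form plays the role of the process with the \emph{smaller} off-diagonal covariances (and hence, by Slepian's monotonicity, the larger expected maximum). Concretely, I would index both processes by pairs $(\x^{(i)},\y^{(i)})$ with $\x^{(i)}\in\{-\frac{1}{\sqrt{n}},\frac{1}{\sqrt{n}}\}^n$ and $\|\y^{(i)}\|_2=1$, and set
$$X_i=(\y^{(i)})^TH\x^{(i)},\qquad Y_i=\frac{1}{\sqrt{2}}(\y^{(i)})^TH^{(1)}\y^{(i)}+\frac{1}{\sqrt{2}}(\x^{(i)})^TH^{(2)}\x^{(i)}.$$
Then $E(\max_i X_i)$ and $E(\max_i Y_i)$ are precisely the two sides of (\ref{eq:posexplemmalb}), and the desired conclusion $E(\max_i X_i)\geq E(\max_i Y_i)$ is exactly what Theorem \ref{thm:Slepian2} yields once its two hypotheses are verified.

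First I would check the equal-variance condition. Using independence of the entries of $H$ one gets $E(X_i^2)=\|\y^{(i)}\|_2^2\|\x^{(i)}\|_2^2=\|\x^{(i)}\|_2^2=1$. For $Y_i$ one invokes the fourth-moment identity $E((\v^TG\v)^2)=\|\v\|_2^4$ for a matrix $G$ with i.i.d.\ standard normal entries, together with the independence of $H^{(1)}$ and $H^{(2)}$ (which annihilates the cross term), giving $E(Y_i^2)=\frac12\|\y^{(i)}\|_2^4+\frac12\|\x^{(i)}\|_2^4=1$. Hence condition~1 holds with common value $1$.

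Next I would verify the covariance inequality. A direct moment computation gives $E(X_iX_l)=((\y^{(i)})^T\y^{(l)})((\x^{(i)})^T\x^{(l)})$, while the analogous calculation for the quadratic forms (again using $E(H^{(1)}_{ab}H^{(1)}_{cd})=\delta_{ac}\delta_{bd}$ and the independence of $H^{(1)},H^{(2)}$) yields $E(Y_iY_l)=\frac12((\y^{(i)})^T\y^{(l)})^2+\frac12((\x^{(i)})^T\x^{(l)})^2$. Writing $a=(\y^{(i)})^T\y^{(l)}$ and $b=(\x^{(i)})^T\x^{(l)}$, condition~2 reduces to $ab\leq\frac12 a^2+\frac12 b^2$, i.e.\ to $0\leq\frac12(a-b)^2$, which always holds. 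With both hypotheses in place, Theorem \ref{thm:Slepian2} immediately delivers (\ref{eq:posexplemmalb}).

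I expect the only real obstacle here to be bookkeeping rather than any conceptual difficulty: getting the second- and fourth-moment computations for the quadratic forms right (the identities $E((\v^TG\v)^2)=\|\v\|_2^4$ and $E((\v^TG\v)(\w^TG\w))=(\v^T\w)^2$), and — more importantly — making sure the roles of $X_i$ and $Y_i$ are assigned in the orientation that produces a \emph{lower} rather than an upper bound. The orientation check ultimately rests on the elementary observation that the original bilinear form has covariance $ab$, whereas the decoupled quadratic comparison process has the larger covariance $\frac12(a^2+b^2)\geq ab$, so that Slepian's monotonicity pushes the expected maximum in precisely the desired direction.
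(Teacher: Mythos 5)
Your proposal is correct and takes essentially the same approach as the paper's own proof: both apply Theorem \ref{thm:Slepian2} to the processes $(\y^{(i)})^TH\x^{(i)}$ and $\frac{1}{\sqrt{2}}(\y^{(i)})^TH^{(1)}\y^{(i)}+\frac{1}{\sqrt{2}}(\x^{(i)})^TH^{(2)}\x^{(i)}$, verify the common variance equals $1$, and reduce the covariance comparison to $\frac{1}{2}\left((\x^{(i)})^T\x^{(l)}-(\y^{(i)})^T\y^{(l)}\right)^2\geq 0$. The only difference is that you swap the names $X_i$ and $Y_i$ relative to the paper (your labeling matches the theorem's stated hypotheses directly), which is immaterial.
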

\begin{proof}
As was the case with the corresponding proof in the previous subsection, the proof is a direct application of Theorem \ref{thm:Slepian2}. Namely, one starts by defining processes $X_i$ and $Y_i$ in the following way
\begin{equation}
Y_i=(\y^{(i)})^T H\x^{(i)} \quad X_i=\frac{1}{\sqrt{2}}(\y^{(i)})^TH^{(1)}\y^{(i)}+\frac{1}{\sqrt{2}}(\x^{(i)})^TH^{(2)}\x^{(i)}.\label{eq:posexplemmaproof1lb}
\end{equation}
Then clearly
\begin{equation}
EY_i^2=EX_i^2=\|\x^{(i)}\|_2^2=1.\label{eq:posexplemmaproof2lb}
\end{equation}
One then further has
\begin{eqnarray}
EY_iY_l & = & (\y^{(i)})^T\y^{(l)}(\x^{(l)})^T\x^{(i)} \nonumber \\
EX_iX_l & = & \frac{1}{2}((\y^{(i)})^T\y^{(l)})^2+\frac{1}{2}((\x^{(l)})^T\x^{(i)})^2.\label{eq:posexplemmaproof3lb}
\end{eqnarray}
And after a small algebraic transformation
\begin{eqnarray}
EX_iX_l-EY_iY_l & = & \frac{1}{2}((\y^{(i)})^T\y^{(l)})^2+\frac{1}{2}((\x^{(l)})^T\x^{(i)})^2-(\y^{(i)})^T\y^{(l)}(\x^{(l)})^T\x^{(i)} \nonumber \\
& = & \frac{1}{2}((\x^{(l)})^T\x^{(i)}-(\y^{(i)})^T\y^{(l)})^2\nonumber \\
& \geq & 0.\label{eq:posexplemmaproof4lb}
\end{eqnarray}
Combining (\ref{eq:posexplemmaproof2lb}) and (\ref{eq:posexplemmaproof4lb}) and using results of Theorem \ref{thm:Slepian2} one then easily obtains (\ref{eq:posexplemmalb}).
\end{proof}

Using results of Lemma \ref{lemma:posexplemmalb} we then have
\begin{multline}
E(\max_{\x\in\{-\frac{1}{\sqrt{n}},\frac{1}{\sqrt{n}}\}^n} \|H\x\|_2) =E(\max_{\x\in\{-\frac{1}{\sqrt{n}},\frac{1}{\sqrt{n}}\}^n,\|\y\|_2=1}(\y^T H\x))\\\geq
E(\max_{\x\in\{-\frac{1}{\sqrt{n}},\frac{1}{\sqrt{n}}\}^n,\|\y\|_2=1}\frac{1}{\sqrt{2}}\y^TH^{(1)}\y+\frac{1}{\sqrt{2}}\x^TH^{(2)}\x)\\
=E(\max_{\|\y\|_2=1}\frac{1}{\sqrt{2}}\y^TH^{(1)}\y)+E(\max_{\x\in\{-\frac{1}{\sqrt{n}},\frac{1}{\sqrt{n}}\}^n}\frac{1}{\sqrt{2}}\x^TH^{(2)}\x),\label{eq:poshopaftlemma2lb}
\end{multline}
and after scaling
\begin{multline}
\frac{E(\max_{\x\in\{-\frac{1}{\sqrt{n}},\frac{1}{\sqrt{n}}\}^n} \|H\x\|_2)}{\sqrt{n}} =\frac{E(\max_{\x\in\{-\frac{1}{\sqrt{n}},\frac{1}{\sqrt{n}}\}^n,\|\y\|_2=1}(\y^T H\x))}{\sqrt{n}}\\\geq
\frac{E(\max_{\|\y\|_2=1}(\y^TH^{(1)}\y))}{\sqrt{2n}}+\frac{E(\max_{\x\in\{-\frac{1}{\sqrt{n}},\frac{1}{\sqrt{n}}\}^n}\x^TH^{(2)}\x))}{\sqrt{2n}}.\label{eq:poshopaftlemma3lb}
\end{multline}
Now, clearly, $\max_{\|\y\|_2=1}(\y^TH^{(1)}\y)$ is the maximum singular value of a Gaussian $m\times m$ matrix $H^{(1)}$. From the theory of large Gaussian random matrices one easily has
\begin{equation}
\lim_{m\rightarrow \infty}\frac{E(\max_{\|\y\|_2=1}(\y^TH^{(1)}\y))}{\sqrt{2m}}=1.\label{eq:singvallimit}
\end{equation}
Moreover, using incredible results of \cite{Parisi80,Tal06,Guerra03} one has
\begin{equation}
\lim_{n\rightarrow \infty}\frac{E(\max_{\x\in\{-\frac{1}{\sqrt{n}},\frac{1}{\sqrt{n}}\}^n}\x^TH^{(2)}\x))}{\sqrt{2n}}=\xi_{SK}\approx 0.763,\label{eq:skmodel}
\end{equation}
where $\xi_{SK}$ is the average ground state energy of the so-called Sherrington-Kirkpatrick (SK) model in the thermodynamic limit. More on the SK model can be found in excellent references \cite{Parisi80,Tal06,Guerra03,SheKir72}. We do mention that the work of \cite{Parisi80,Tal06,Guerra03} indeed settled the thermodynamic behavior of the SK model. However, the characterization of $\xi_{SK}$ in \cite{Parisi80,Tal06,Guerra03} is not explicit and the value we give in (\ref{eq:skmodel}) is a numerical estimate (it is quite likely though, that the estimate we give is a bit conservative; the true value is probably more around $0.7632$). Connecting (\ref{eq:poshopaftlemma3lb}), (\ref{eq:singvallimit}), and (\ref{eq:skmodel}) one then has the following lower-bounding limiting counterpart to (\ref{eq:poshopubexp1})
\begin{equation}
\lim_{n\rightarrow\infty}\frac{E\xi_p}{\sqrt{n}}=\lim_{n\rightarrow\infty}\frac{E(\max_{\x\in\{-\frac{1}{\sqrt{n}},\frac{1}{\sqrt{n}}\}^n} \|H\x\|_2)}{\sqrt{n}} \geq \sqrt{\alpha}+\xi_{SK}\approx\sqrt{\alpha}+0.763.\label{eq:poshopubexplb}
\end{equation}

We now turn to deriving a more general probabilistic result related to $\xi_p$. We will do so through the following lemma (essentially a lower-bounding counterpart to Lemma \ref{lemma:posexplemma}).
\begin{lemma}
Let $H$ be an $m\times n$ matrix with i.i.d. standard normal components. Let $H^{(1)}$ and $H^{(2)}$ be $m\times m$ and $n\times n$ matrices, respectively, with i.i.d. standard normal components. Let $\zeta$ be a scalar. Then
\begin{equation}
P(\max_{\x\in\{-\frac{1}{\sqrt{n}},\frac{1}{\sqrt{n}}\}^n\|\y\|_2=1}(\y^T A\x-\zeta)\geq 0)\geq
P(\max_{\x\in\{-\frac{1}{\sqrt{n}},\frac{1}{\sqrt{n}}\}^n,\|\y\|_2=1}(\frac{1}{\sqrt{2}}\y^TH^{(1)}\y+\frac{1}{\sqrt{2}}\x^TH^{(2)}\x)-\zeta)\geq 0).\label{eq:posproblemmalb}
\end{equation}\label{lemma:posproblemmalb}
\end{lemma}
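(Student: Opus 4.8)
The plan is to mirror exactly the structure of Lemma \ref{lemma:posproblemma} from the upper-bounding subsection, but now invoking Theorem \ref{thm:Slepian1} in the direction that yields a lower bound, and reusing the Gaussian comparison computation already carried out in the proof of Lemma \ref{lemma:posexplemmalb}. The point is that Lemma \ref{lemma:posexplemmalb} is the expectation-level statement obtained from Theorem \ref{thm:Slepian2}, and the present lemma is simply its probabilistic counterpart, derived from the more refined Theorem \ref{thm:Slepian1} in the same way that Lemma \ref{lemma:posproblemma} is the probabilistic counterpart of Lemma \ref{lemma:posexplemma}.

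First I would define the two families of centered Gaussian processes indexed by the feasible pairs $(\x^{(i)},\y^{(i)})$ with $\x^{(i)}\in\{-\frac{1}{\sqrt{n}},\frac{1}{\sqrt{n}}\}^n$ and $\|\y^{(i)}\|_2=1$, exactly as in (\ref{eq:posexplemmaproof1lb}):
\begin{equation*}
Y_i=(\y^{(i)})^T H\x^{(i)}, \qquad X_i=\frac{1}{\sqrt{2}}(\y^{(i)})^TH^{(1)}\y^{(i)}+\frac{1}{\sqrt{2}}(\x^{(i)})^TH^{(2)}\x^{(i)}.
\end{equation*}
Next I would record that these processes satisfy the hypotheses of Theorem \ref{thm:Slepian1}: the variance equality $EY_i^2=EX_i^2=1$ is (\ref{eq:posexplemmaproof2lb}), and the cross-correlation inequality $EX_iX_l\geq EY_iY_l$ for $i\neq l$ is precisely (\ref{eq:posexplemmaproof4lb}), where the key algebraic fact is the perfect-square identity $EX_iX_l-EY_iY_l=\frac{1}{2}((\x^{(l)})^T\x^{(i)}-(\y^{(i)})^T\y^{(l)})^2\geq 0$. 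Since Theorem \ref{thm:Slepian1} compares the two processes with the roles reversed relative to Theorem \ref{thm:Gordonmesh1} (here $X$ carries the larger cross-correlation), its conclusion $P(\bigcup_i(X_i\geq\lambda_i))\leq P(\bigcup_i(Y_i\geq\lambda_i))$ applied with the constant threshold $\lambda_i=\zeta$ gives $P(\max_i X_i\geq\zeta)\leq P(\max_i Y_i\geq\zeta)$, which upon rewriting the events as $\max_i(X_i-\zeta)\geq 0$ and $\max_i(Y_i-\zeta)\geq 0$ is exactly (\ref{eq:posproblemmalb}). I should also note the harmless typographical point that the matrix written as $A$ in the statement is the matrix $H$ of the lemma; the substitution $Y_i=(\y^{(i)})^T H\x^{(i)}$ carries this through.

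The proof is therefore essentially a single sentence once the preceding lemma is invoked, so the writeup should simply say that it proceeds as the proof of Lemma \ref{lemma:posexplemmalb}, the only change being that Theorem \ref{thm:Slepian1} replaces Theorem \ref{thm:Slepian2}, in direct analogy to how Lemma \ref{lemma:posproblemma} relates to Lemma \ref{lemma:posexplemma}. There is no genuine obstacle here: the comparison inequalities have already been verified in (\ref{eq:posexplemmaproof2lb})--(\ref{eq:posexplemmaproof4lb}), and all that remains is to check that the direction of Theorem \ref{thm:Slepian1} delivers the stated inequality rather than its reverse. The only subtlety worth a moment's care is confirming the orientation of the comparison — that because $X$ is the more positively correlated process, its union/maximum probability is the smaller one, placing the $H^{(1)},H^{(2)}$ process on the right-hand (larger) side of (\ref{eq:posproblemmalb}) as required for a valid lower bound on the Hopfield form.
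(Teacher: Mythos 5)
Your proposal is correct and follows essentially the same route as the paper, whose entire proof is the remark that the argument of Lemma \ref{lemma:posexplemmalb} carries over verbatim with Theorem \ref{thm:Slepian1} in place of Theorem \ref{thm:Slepian2}. Your extra care about the orientation of the comparison is well placed: with the decoupled process $X_i$ carrying the larger cross-correlations (via the perfect-square identity), Slepian's inequality makes its maximum stochastically smaller, which is exactly the direction asserted in (\ref{eq:posproblemmalb}).
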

\begin{proof}
As in the previous subsection, the proof is basically the same as the proof of Lemma \ref{lemma:posexplemmalb}. The only difference is that instead of Theorem \ref{thm:Slepian2} it relies on Theorem \ref{thm:Slepian1}.
\end{proof}

Let $\zeta=\xi_p^{(l)}$. We will first look at the right-hand side of the inequality in (\ref{eq:posproblemmalb}). The following is then the probability of interest
\begin{equation}
P(\max_{\x\in\{-\frac{1}{\sqrt{n}},\frac{1}{\sqrt{n}}\}^n,\|\y\|_2=1}(\frac{1}{\sqrt{2}}\y^TH^{(1)}\y+\frac{1}{\sqrt{2}}\x^TH^{(2)}\x)-\zeta\geq 0).\label{eq:probanal0lb}
\end{equation}
From the theory of large Gaussian random matrices we then have
\begin{equation}
\lim_{n\rightarrow\infty}P(\max_{\|\y\|_2=1}(\frac{1}{\sqrt{2}}\y^TH^{(1)}\y)\geq (1-\epsilon_1^{(m_s)})\sqrt{m})\geq 1.\label{eq:probanal1lb}
\end{equation}
where $\epsilon_1^{(m_s)}$ is an arbitrarily small constant independent of $n$.
The powerful results of \cite{Parisi80,Tal06,Guerra03} also give
\begin{equation}
\lim_{n\rightarrow\infty}P(\max_{\x\in\{-\frac{1}{\sqrt{n}},\frac{1}{\sqrt{n}}\}^n=1}(\frac{1}{\sqrt{2n}}\x^TH^{(2)}\x)\geq (1-\epsilon_1^{(n_{sk})})\xi_{SK})\geq 1,\label{eq:probanal2lb}
\end{equation}
where $\epsilon_1^{(n_{sk})}$ is an arbitrarily small constant independent of $n$. If one then assumes that
\begin{equation}
\xi_p^{(l)}= (1-\epsilon_1^{(m_s)})\sqrt{m}+(1-\epsilon_1^{(n_{sk})}\xi_{SK})\sqrt{n},\label{eq:probanal3lb}
\end{equation}
then a combination of (\ref{eq:probanal0lb}), (\ref{eq:probanal1lb}), and (\ref{eq:probanal2lb}) gives
\begin{multline}
\lim_{n\rightarrow\infty}P(\max_{\x\in\{-\frac{1}{\sqrt{n}},\frac{1}{\sqrt{n}}\}^n,\|\y\|_2=1}(\frac{1}{\sqrt{2}}\y^TH^{(1)}\y+\frac{1}{\sqrt{2}}\x^TH^{(2)}\x)-\zeta\geq 0)\\
=\lim_{n\rightarrow\infty}P(\max_{\x\in\{-\frac{1}{\sqrt{n}},\frac{1}{\sqrt{n}}\}^n,\|\y\|_2=1}(\frac{1}{\sqrt{2}}\y^TH^{(1)}\y+\frac{1}{\sqrt{2}}\x^TH^{(2)}\x)-
((1-\epsilon_1^{(m_s)})\sqrt{m}+(1-\epsilon_1^{(n_{sk})})\xi_{SK}\sqrt{n})\geq 0)\\
\geq \lim_{n\rightarrow\infty}P(\max_{\|\y\|_2=1}(\frac{1}{\sqrt{2}}\y^TH^{(1)}\y)-
(1-\epsilon_1^{(m_s)})\sqrt{m}\geq 0)\\\times \lim_{n\rightarrow\infty}P(\max_{\x\in\{-\frac{1}{\sqrt{n}},\frac{1}{\sqrt{n}}\}^n}(\frac{\x^TH^{(2)}\x}{\sqrt{2}})-(1-\epsilon_1^{(n_{sk})})\xi_{SK}\sqrt{n}\geq 0)\geq 1.\label{eq:probanal4lb}
\end{multline}
Assuming that (\ref{eq:probanal3lb}) holds then a further combination of (\ref{eq:posproblemmalb}) and (\ref{eq:probanal4lb}) gives
\begin{multline}
P(\max_{\x\in\{-\frac{1}{\sqrt{n}},\frac{1}{\sqrt{n}}\}^n}\|H\x\|_2\geq \xi_p^{(l)})=P(\max_{\x\in\{-\frac{1}{\sqrt{n}},\frac{1}{\sqrt{n}}\}^n\|\y\|_2=1}(\y^T A\x)\geq \xi_p^{(l)})\\\geq
\lim_{n\rightarrow\infty}P(\max_{\x\in\{-\frac{1}{\sqrt{n}},\frac{1}{\sqrt{n}}\}^n,\|\y\|_2=1}(\frac{1}{\sqrt{2}}\y^TH^{(1)}\y+\frac{1}{\sqrt{2}}\x^TH^{(2)}\x)-\xi_p^{(l)}\geq 0)\geq 1.\label{eq:probanal5lb}
\end{multline}

We summarize our results from this subsection in the following lemma.

\begin{lemma}
Let $H$ be an $m\times n$ matrix with i.i.d. standard normal components. Let $n$ be large and let $m=\alpha n$, where $\alpha>0$ is a constant independent of $n$. Let $\xi_p$ be as in (\ref{eq:sqrtposham1}). Let $\xi_{SK}$ be the average ground state energy in the thermodynamic limit of the SK model as defined in (\ref{eq:skmodel}). Further, let all $\epsilon$'s be arbitrarily small constants independent of $n$ and let $\xi_p^{(l)}$ be a scalar such that
\begin{equation}
\frac{\xi_p^{(l)}}{\sqrt{n}}= (1-\epsilon_1^{(m_s)})\sqrt{\alpha}+(1-\epsilon_1^{(n_{sk})})\xi_{SK}.\label{eq:condxipuposgenlemmalb}
\end{equation}
Then
\begin{eqnarray}
& & \lim_{n\rightarrow\infty}P(\max_{\x\in\{-\frac{1}{\sqrt{n}},\frac{1}{\sqrt{n}}\}^n}(\|H\x\|_2)\geq \xi_p^{(l)})\geq 1\nonumber \\
& \Leftrightarrow & \lim_{n\rightarrow\infty}P(\xi_p\geq \xi_p^{(l)})\geq 1 \nonumber \\
& \Leftrightarrow & \lim_{n\rightarrow\infty}P(\xi_p^2\geq (\xi_p^{(l)})^2)\geq 1, \label{eq:posgenproblemmalb}
\end{eqnarray}
and
\begin{equation}
\lim_{n\rightarrow\infty}\frac{E\xi_p}{\sqrt{n}}=\lim_{n\rightarrow\infty}\frac{E(\max_{\x\in\{-\frac{1}{\sqrt{n}},\frac{1}{\sqrt{n}}\}^n} \|H\x\|_2)}{\sqrt{n}} \geq \sqrt{\alpha}+\xi_{SK}\approx \sqrt{\alpha}+0.763.\label{eq:posgenexplemmalb}
\end{equation}
\label{lemma:posgenlemmalb}
\end{lemma}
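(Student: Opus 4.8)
The plan is to assemble the statement from the two Gaussian comparison inequalities already established in this subsection, Lemma~\ref{lemma:posexplemmalb} for the expectation and Lemma~\ref{lemma:posproblemmalb} for the probability, together with the known asymptotics of the extremal quadratic form of a Gaussian square matrix and of the SK ground state energy. Throughout I would work with the bilinear reformulation $\xi_p=\max_{\x,\|\y\|_2=1}\y^TH\x$ from (\ref{eq:sqrtposham2lb}), since it is precisely the form to which the comparison lemmas apply, and the two displayed equivalences for $\xi_p$ and $\xi_p^2$ are immediate from $\xi_p\geq 0$ and $\xi_p^{(l)}\geq 0$.

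For the expectation bound (\ref{eq:posgenexplemmalb}), I would start from Lemma~\ref{lemma:posexplemmalb}, which replaces $E\xi_p$ by the lower comparison quantity $E(\max_{\x,\|\y\|_2=1}(\frac{1}{\sqrt{2}}\y^TH^{(1)}\y+\frac{1}{\sqrt{2}}\x^TH^{(2)}\x))$. The key observation is that the objective is a sum of a term in $\y$ alone and a term in $\x$ alone, while the feasible set is the product of the unit sphere with the hypercube vertex set; hence the joint maximum splits as in (\ref{eq:poshopaftlemma2lb}) into the sum of the two separate maxima, and linearity of expectation separates the two expected values. The first piece $\max_{\|\y\|_2=1}\y^TH^{(1)}\y$ is the extremal quadratic form (equivalently, the top eigenvalue of the symmetric part) of a Gaussian square matrix, whose scaled expectation converges to $1$ by (\ref{eq:singvallimit}), contributing $\sqrt{\alpha}$ after dividing by $\sqrt{n}$; the second piece is exactly the SK ground state form, whose scaled expectation converges to $\xi_{SK}$ by (\ref{eq:skmodel}). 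Collecting the two limits yields (\ref{eq:poshopubexplb}), which is (\ref{eq:posgenexplemmalb}).

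For the probabilistic statement (\ref{eq:posgenproblemmalb}), I would invoke Lemma~\ref{lemma:posproblemmalb} with $\zeta=\xi_p^{(l)}$, so that $P(\xi_p\geq\xi_p^{(l)})$ is bounded below by the probability that the comparison process $\frac{1}{\sqrt{2}}\y^TH^{(1)}\y+\frac{1}{\sqrt{2}}\x^TH^{(2)}\x$ exceeds $\xi_p^{(l)}$. Using the product structure of the feasible set once more and the independence of $H^{(1)}$ and $H^{(2)}$, I would lower bound this by the product of two events: that the eigenvalue term is at least $(1-\epsilon_1^{(m_s)})\sqrt{m}$, which holds with probability tending to $1$ by (\ref{eq:probanal1lb}), and that the SK term is at least $(1-\epsilon_1^{(n_{sk})})\xi_{SK}\sqrt{n}$, which holds with probability tending to $1$ by (\ref{eq:probanal2lb}). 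With $\xi_p^{(l)}$ chosen as in (\ref{eq:condxipuposgenlemmalb}) the two thresholds add up to exactly $\xi_p^{(l)}$, so the comparison probability tends to $1$ as in (\ref{eq:probanal4lb}), and Lemma~\ref{lemma:posproblemmalb} transfers this to $\xi_p$ as in (\ref{eq:probanal5lb}).

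The routine parts here are the asymptotics and concentration for the extremal quadratic form, which are standard for Gaussian matrices. The step carrying the real content is the decoupling of the joint maximum into the sum of an eigenvalue problem and an SK problem, since this is what makes the otherwise opaque comparison bound explicitly computable; it relies on the product form of the constraint set and on the careful choice of comparison process in Lemma~\ref{lemma:posexplemmalb} that renders $EX_iX_l-EY_iY_l$ a perfect square. The genuine obstacle, which I would cite rather than reprove, is the evaluation of $\xi_{SK}$ via the Parisi formula; everything else is bookkeeping around the two comparison lemmas.
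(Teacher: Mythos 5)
Your proposal is correct and follows essentially the same route as the paper: the paper's proof of Lemma~\ref{lemma:posgenlemmalb} is precisely the preceding discussion you reconstruct, namely Lemma~\ref{lemma:posexplemmalb} plus the decoupled limits (\ref{eq:singvallimit}) and (\ref{eq:skmodel}) giving (\ref{eq:poshopubexplb}), and Lemma~\ref{lemma:posproblemmalb} with $\zeta=\xi_p^{(l)}$ combined with (\ref{eq:probanal1lb})--(\ref{eq:probanal2lb}) giving (\ref{eq:probanal4lb}) and (\ref{eq:probanal5lb}). Your handling of the separable maximum over the product feasible set and the citation of the Parisi/SK value matches the paper's argument exactly.
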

\begin{proof}
The proof follows from the above discussion, (\ref{eq:poshopubexplb}), and (\ref{eq:probanal5lb}).
\end{proof}

A combination of results obtained in Lemmas (\ref{lemma:posgenlemma}) and (\ref{lemma:posgenlemmalb}) then gives
\begin{equation}
\sqrt{\alpha}+0.763\leq \approx \sqrt{\alpha}+\xi_{SK} \leq \lim_{n\rightarrow\infty}\frac{E\xi_p}{\sqrt{n}}=\lim_{n\rightarrow\infty}\frac{E(\max_{\x\in\{-\frac{1}{\sqrt{n}},\frac{1}{\sqrt{n}}\}^n} \|H\x\|_2)}{\sqrt{n}} \leq \sqrt{\alpha}+\sqrt{\frac{2}{\pi}}\approx \sqrt{\alpha}+0.798.\label{eq:posublb}
\end{equation}
Although we don't go into further analytical considerations as to what happens with the above bounds as $\alpha$ changes, we do mention that as $\alpha\rightarrow 0$ the upper bound is expected to be close to the true value. On the other hand, as $\alpha\rightarrow \infty$ the lower bound is expected to be close to the true value (for more in this direction see, e.g. \cite{JYZhao11}). A massive set of numerical experiments that we performed (and that we will report on in a forthcoming paper) seems to indicate that this indeed is a trend. In other words, as $\alpha$ grows from zero to $\infty$ the true value of $\lim_{n\rightarrow\infty}\frac{E\xi_p}{\sqrt{n}}$ seems to slowly transition from the most left to the most right quantity given in (\ref{eq:posublb}).

\section{Negative Hopfield form}
\label{sec:neghop}

In this section we will look at the following optimization problem (which again clearly is the key component in estimating the corresponding ground state energy of what we call the negative Hopfield model in the thermodynamic limit)
\begin{equation}
\min_{\x\in\{-\frac{1}{\sqrt{n}},\frac{1}{\sqrt{n}}\}^n}\|H\x\|_2^2.\label{eq:negham1}
\end{equation}
Similarly to what was the case when we studied the positive form in the previous section, for a deterministic (given fixed) $H$ the above problem is of course known to be NP-hard. Of course, this is same as was the case for (\ref{eq:posham1}) as it again essentially falls under the class of binary quadratic optimization problems. Consequently, we will again adopt a strategy similar to the one that we considered when studied the positive form in the previous section. Namely, instead of looking at the problem in (\ref{eq:negham1}) in a deterministic way i.e. in a way that assumes that matrix $H$ is deterministic, we will look at it in a statistical scenario. Also as in previous section, we will assume that the elements of matrix $H$ are i.i.d. standard normals. We will then call the form (\ref{eq:posham1}) with Gaussian $H$, the Gaussian negative Hopfield form. On the other hand, we will call the form (\ref{eq:negham1}) with Bernoulli $H$, the Bernoulli negative Hopfield form. In the remainder of this section we will look at possible ways to estimate the optimal value of the optimization problem in (\ref{eq:negham1}). In fact we will introduce a strategy similar the one presented in the previous section to create a lower-bound on the optimal value of (\ref{eq:negham1}).

\subsection{Lower-bounding ground state energy of the negative Hopfield form}
\label{sec:neghoplb}

In this section we will look at problem from (\ref{eq:negham1}).In fact, to be a bit more precise, as in the previous section, in order to make the exposition as simple as possible, we will look at its a slight variant given below
\begin{equation}
\xi_n=\min_{\x\in\{-\frac{1}{\sqrt{n}},\frac{1}{\sqrt{n}}\}^n}\|H\x\|_2.\label{eq:sqrtnegham1}
\end{equation}
As mentioned above, we will assume that the elements of $H$ are i.i.d. standard normal random variables. Now, to create a lower-bounding strategy for the negative Hopfield form we will rely on Theorems \ref{thm:Gordonmesh1} and Theorem \ref{thm:Gordonmesh2}. We start by reformulating the problem in (\ref{eq:sqrtnegham1}) in the following way
\begin{equation}
\xi_n=\min_{\x\in\{-\frac{1}{\sqrt{n}},\frac{1}{\sqrt{n}}\}^n}\max_{\|\y\|_2=1}\y^TH\x.\label{eq:sqrtnegham2}
\end{equation}
As in the previous section, we will first focus on the expected value of $\xi_n$ and then on its more general probabilistic properties. The following is then a direct application of Theorem \ref{thm:Gordonmesh2}.
\begin{lemma}
Let $H$ be an $m\times n$ matrix with i.i.d. standard normal components. Let $\g$ and $\h$ be $m\times 1$ and $n\times 1$ vectors, respectively, with i.i.d. standard normal components. Also, let $g$ be a standard normal random variable. Then
\begin{equation}
E(\min_{\x\in\{-\frac{1}{\sqrt{n}},\frac{1}{\sqrt{n}}\}^n}\max_{\|\y\|_2=1}(\y^T H\x +\|\x\|_2 g))\geq E(\min_{\x\in\{-\frac{1}{\sqrt{n}},\frac{1}{\sqrt{n}}\}^n}\max_{\|\y\|_2=1}(\|\x\|_2\g^T\y+\h^T\x)).\label{eq:negexplemma}
\end{equation}\label{lemma:negexplemma}
\end{lemma}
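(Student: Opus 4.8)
The plan is to apply Gordon's Theorem \ref{thm:Gordonmesh2} directly, mirroring the structure of the proof of Lemma \ref{lemma:posexplemma} but now invoking the full two-index ($\min_i\max_j$) version rather than its single-index Slepian specialization. The more general theorem is needed precisely because the quantity of interest genuinely has a $\min_{\x}\max_{\y}$ structure: the outer minimization over the binary vectors $\x$ will play the role of the index $i$, while the inner maximization over the unit sphere $\|\y\|_2=1$ will play the role of the index $j$. Indexing the admissible pairs, I would set
\[
Y_{ij}=(\y^{(j)})^T H\x^{(i)}+\|\x^{(i)}\|_2 g,\qquad X_{ij}=\|\x^{(i)}\|_2\g^T\y^{(j)}+\h^T\x^{(i)},
\]
so that $Y_{ij}$ carries the matrix $H$ and $X_{ij}$ carries the decoupled vectors $\g,\h$. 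Since Theorem \ref{thm:Gordonmesh2} concludes $E(\min_i\max_j X_{ij})\leq E(\min_i\max_j Y_{ij})$, this assignment gives exactly the desired direction of (\ref{eq:negexplemma}).

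The verification of the three hypotheses uses only the identity $E[(\a^TH\b)(\c^TH\d)]=(\a^T\c)(\b^T\d)$ for a Gaussian $H$, together with $\|\x^{(i)}\|_2^2=1$ and $\|\y^{(j)}\|_2=1$. First, both processes have second moment $2\|\x^{(i)}\|_2^2=2$, which is hypothesis (1). Hypothesis (3), the comparison for $i\neq l$, is the same computation as in the positive case: one finds
\[
E(Y_{ij}Y_{lk})-E(X_{ij}X_{lk})=\left(\|\x^{(i)}\|_2\|\x^{(l)}\|_2-(\x^{(i)})^T\x^{(l)}\right)\left(1-(\y^{(j)})^T\y^{(k)}\right),
\]
and both factors are nonnegative by Cauchy--Schwarz (using $\|\x^{(i)}\|_2=\|\x^{(l)}\|_2=\|\y^{(j)}\|_2=\|\y^{(k)}\|_2=1$), exactly as in (\ref{eq:posexplemmaproof4}). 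What is genuinely new relative to the Slepian-based positive case is hypothesis (2), which compares two terms sharing the same $\x^{(i)}$ (same $i$, different $j,k$); here a short computation gives $E(X_{ij}X_{ik})=E(Y_{ij}Y_{ik})=(\y^{(j)})^T\y^{(k)}+1$, so (2) holds with equality.

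The one point requiring care, and the main conceptual obstacle compared with the positive form, is precisely the appearance of this intermediate correlation condition (2). In the positive case only a single maximization was present, so Slepian's lemma sufficed and no same-$\x$ comparison ever arose; here, because the $\min\max$ ordering matters, one must check that the correlations of two $X$'s (resp. two $Y$'s) that share the same $\x$ but differ in $\y$ line up correctly. Once one observes that these same-$\x$ correlations coincide exactly for the two processes, hypothesis (2) is immediate, and the remaining estimates are identical in spirit to (\ref{eq:posexplemmaproof2})--(\ref{eq:posexplemmaproof4}). Applying Theorem \ref{thm:Gordonmesh2} then yields (\ref{eq:negexplemma}).
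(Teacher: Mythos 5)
Your proposal is correct and follows essentially the same route as the paper: you define the identical processes $Y_{ij}$ and $X_{ij}$, verify the equal variances, the equality of same-$i$ correlations (condition (2)), and the $i\neq l$ comparison via the factorization $(\|\x^{(i)}\|_2\|\x^{(l)}\|_2-(\x^{(i)})^T\x^{(l)})(1-(\y^{(j)})^T\y^{(k)})\geq 0$, and then invoke Theorem \ref{thm:Gordonmesh2} exactly as the paper does. Your added remark that the genuinely new ingredient relative to the Slepian-based positive case is the same-$\x$ correlation condition, which here holds with equality, is accurate and matches the paper's computation in (\ref{eq:negexplemmaproof31}).
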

\begin{proof}
As mentioned above, the proof is a standard/direct application of Theorem \ref{thm:Gordonmesh2}. We will sketch it for completeness. Namely, one starts by defining processes $X_i$ and $Y_i$ in the following way
\begin{equation}
Y_{ij}=(\y^{(j)})^T H\x^{(i)} +\|\x^{(i)}\|_2 g\quad X_{ij}=\|\x^{(i)}\|_2\g^T\y^{(j)}+\h^T\x^{(i)}.\label{eq:negexplemmaproof1}
\end{equation}
Then clearly
\begin{equation}
EY_{ij}^2=EX_{ij}^2=2\|\x^{(i)}\|_2^2=2.\label{eq:negexplemmaproof2}
\end{equation}
One then further has
\begin{eqnarray}
EY_{ij}Y_{ik} & = & (\x^{(i)})^T\x^{(i)}(\y^{(k)})^T\y^{(j)}+\|\x^{(i)}\|_2\|\x^{(i)}\|_2 \nonumber \\
EX_{ij}X_{ik} & = & \|\x^{(i)}\|_2\|\x^{(i)}\|_2(\y^{(k)})^T\y^{(j)}+(\x^{(i)})^T\x^{(i)},\label{eq:negexplemmaproof3}
\end{eqnarray}
and clearly
\begin{equation}
EX_{ij}X_{ik}=EY_{ij}Y_{ik}.\label{eq:negexplemmaproof31}
\end{equation}
Moreover,
\begin{eqnarray}
EY_{ij}Y_{lk} & = & (\y^{(j)})^T\y^{(k)}(\x^{(i)})^T\x^{(l)}+\|\x^{(i)}\|_2\|\x^{(l)}\|_2 \nonumber \\
EX_{ij}X_{lk} & = & (\y^{(j)})^T\y^{(k)}\|\x^{(i)}\|_2\|\x^{(l)}\|_2+(\x^{(i)})^T\x^{(l)}.\label{eq:negexplemmaproof32}
\end{eqnarray}
And after a small algebraic transformation
\begin{eqnarray}
EY_{ij}Y_{lk}-EX_{ij}X_{lk} & = & \|\x^{(i)}\|_2\|\x^{(l)}\|_2(1-(\y^{(j)})^T\y^{(k)})-(\x^{(i)})^T\x^{(l)}(1-(\y^{(j)})^T\y^{(k)}) \nonumber \\
& = & (\|\x^{(i)}\|_2\|\x^{(l)}\|_2-(\x^{(i)})^T\x^{(l)})(1-(\y^{(j)})^T\y^{(k)})\nonumber \\
& \geq & 0.\label{eq:negexplemmaproof4}
\end{eqnarray}
Combining (\ref{eq:negexplemmaproof2}), (\ref{eq:negexplemmaproof31}), and (\ref{eq:negexplemmaproof4}) and using results of Theorem \ref{thm:Gordonmesh2} one then easily obtains (\ref{eq:negexplemma}).
\end{proof}

Using results of Lemma \ref{lemma:negexplemma} we then have
\begin{multline}
E(\min_{\x\in\{-\frac{1}{\sqrt{n}},\frac{1}{\sqrt{n}}\}^n} \|H\x\|_2) =E(\min_{\x\in\{-\frac{1}{\sqrt{n}},\frac{1}{\sqrt{n}}\}^n}\max_{\|\y\|_2=1}(\y^T H\x +\|\x\|_2g))\\ \geq E(\min_{\x\in\{-\frac{1}{\sqrt{n}},\frac{1}{\sqrt{n}}\}^n}\max_{\|\y\|_2=1}(\|\x\|_2\g^T\y+\h^T\x))=E\|\x\|_2\|\g\|_2-E\sum_{i=1}^{n}|\h_i|\geq \sqrt{n}(\sqrt{m}-\frac{1}{4\sqrt{m}})-\sqrt{\frac{2}{\pi}}n.\label{eq:neghopaftlemma2}
\end{multline}
Connecting beginning and end of (\ref{eq:neghopaftlemma2}) we finally have a lower bound on $E\xi_n$ from (\ref{eq:sqrtnegham1}), i.e.
\begin{equation}
E\xi_n=E(\min_{\x\in\{-\frac{1}{\sqrt{n}},\frac{1}{\sqrt{n}}\}^n} \|H\x\|_2) \geq (\sqrt{m}-\frac{1}{4\sqrt{m}})-\sqrt{\frac{2}{\pi}}\sqrt{n}=\sqrt{n}(\sqrt{\alpha}-\frac{1}{4\sqrt{mn}}-\sqrt{\frac{2}{\pi}}),\label{eq:neghopubexp}
\end{equation}
or in a scaled (possibly) more convenient form
\begin{equation}
\frac{E\xi_n}{\sqrt{n}}=\frac{E(\max_{\x\in\{-\frac{1}{\sqrt{n}},\frac{1}{\sqrt{n}}\}^n} \|H\x\|_2)}{\sqrt{n}} \geq \sqrt{\alpha}-\frac{1}{4\sqrt{mn}}-\sqrt{\frac{2}{\pi}}.\label{eq:neghopubexp1}
\end{equation}
Of course, the above result will be useful as long as the most right quantity is positive.

Following what was done in the previous section we will now turn to deriving a more general probabilistic result related to $\xi_n$ (all the comments related to these type of results that we have made in the previous section apply here as well). We will do so through the following lemma.
\begin{lemma}
Let $H$ be an $m\times n$ matrix with i.i.d. standard normal components. Let $\g$ and $\h$ be $m\times 1$ and $n\times 1$ vectors, respectively, with i.i.d. standard normal components. Also, let $g$ be a standard normal random variable and let $\zeta_{\x}$ be a function of $\x$. Then
\begin{equation}
P(\min_{\x\in\{-\frac{1}{\sqrt{n}},\frac{1}{\sqrt{n}}\}^n}\max_{\|\y\|_2=1}(\y^T A\x+\|\x\|_2g-\zeta_{\x})\geq 0)\geq
P(\min_{\x\in\{-\frac{1}{\sqrt{n}},\frac{1}{\sqrt{n}}\}^n}\max_{\|\y\|_2=1}(\|\x\|_2\g^T\y+\h^T\x-\zeta_{\x})\geq 0).\label{eq:negproblemma}
\end{equation}\label{lemma:negproblemma}
\end{lemma}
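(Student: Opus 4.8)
The plan is to follow the proof of Lemma~\ref{lemma:negexplemma} almost verbatim, replacing the expectation comparison of Theorem~\ref{thm:Gordonmesh2} by the probabilistic comparison of Theorem~\ref{thm:Gordonmesh1}. The two theorems share identical covariance hypotheses on the doubly-indexed processes, so essentially all of the computational work needed here has already been carried out; only the passage from expected min-max values to $\geq 0$-threshold probabilities is new.

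First I would reintroduce the same two centered Gaussian processes indexed by a pair $(i,j)$, with $i$ enumerating the discrete cube $\{-\frac{1}{\sqrt{n}},\frac{1}{\sqrt{n}}\}^n$ as $\x^{(i)}$ and $j$ enumerating the unit sphere $\{\|\y\|_2=1\}$ as $\y^{(j)}$:
\[
Y_{ij}=(\y^{(j)})^T H\x^{(i)}+\|\x^{(i)}\|_2 g,\qquad X_{ij}=\|\x^{(i)}\|_2\g^T\y^{(j)}+\h^T\x^{(i)}.
\]
The three covariance conditions required by Theorem~\ref{thm:Gordonmesh1} are exactly those verified in the proof of Lemma~\ref{lemma:negexplemma}: the variances agree ($EY_{ij}^2=EX_{ij}^2=2$), the same-$i$ cross-correlations coincide ($EX_{ij}X_{ik}=EY_{ij}Y_{ik}$, which in particular satisfies the required inequality), and for $i\neq l$ one has $EY_{ij}Y_{lk}-EX_{ij}X_{lk}=(\|\x^{(i)}\|_2\|\x^{(l)}\|_2-(\x^{(i)})^T\x^{(l)})(1-(\y^{(j)})^T\y^{(k)})\geq 0$. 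Thus no fresh algebra is needed; the same covariance bookkeeping is simply reused for the probabilistic theorem.

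The genuinely new step is to rewrite the min-max events in the form $\bigcap_i\bigcup_j(\,\cdot\,)$ demanded by Theorem~\ref{thm:Gordonmesh1}. Taking the thresholds to be $\lambda_{ij}=\zeta_{\x^{(i)}}$ (constant in $j$, which the theorem permits), I would use the elementary identity
\[
\Big\{\min_{i}\max_{j}(W_{ij}-\lambda_{ij})\geq 0\Big\}=\bigcap_i\bigcup_j\big(W_{ij}\geq\lambda_{ij}\big),
\]
valid for either process $W\in\{X,Y\}$, since $\max_j(W_{ij}-\lambda_{ij})\geq 0$ for every $i$ is the same as saying that for every $i$ there exists some $j$ with $W_{ij}\geq\lambda_{ij}$. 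With $Y$ playing the role of the original Hopfield process and $X$ the comparison process, Theorem~\ref{thm:Gordonmesh1} gives $P(\bigcap_i\bigcup_j(X_{ij}\geq\lambda_{ij}))\leq P(\bigcap_i\bigcup_j(Y_{ij}\geq\lambda_{ij}))$; identifying the $Y$-event with the left-hand side and the $X$-event with the right-hand side of (\ref{eq:negproblemma}) yields the claim.

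I expect the only real obstacle to be the technicality, tacitly absorbed throughout the paper, that the index $j$ ranges over the continuum $\{\|\y\|_2=1\}$ while Gordon's theorem is stated for finitely many indices. This is handled in the usual way: apply Theorem~\ref{thm:Gordonmesh1} on a finite $\epsilon$-net of the sphere and pass to the limit, using that the Gaussian fields are almost-surely continuous in $\y$ and that the maximum over the compact sphere is attained, so that the net probabilities converge to the continuum ones. Apart from this limiting argument, the proof is a direct transcription of the expectation version with the expected min-max replaced by threshold-crossing probabilities.
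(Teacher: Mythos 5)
Your proposal is correct and is essentially the paper's own proof: the paper simply states that the argument is the same as that of Lemma~\ref{lemma:negexplemma}, with Theorem~\ref{thm:Gordonmesh2} replaced by Theorem~\ref{thm:Gordonmesh1}, which is exactly the substitution you carry out. The extra details you supply --- rewriting the min-max threshold events as $\bigcap_i\bigcup_j(W_{ij}\geq\lambda_{ij})$ and the $\epsilon$-net passage from a finite index set to the sphere --- are points the paper leaves implicit, and your treatment of them is sound.
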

\begin{proof}
The proof is basically same as the proof of Lemma \ref{lemma:negexplemma}. The only difference is that instead of Theorem \ref{thm:Gordonmesh2} it relies on Theorem \ref{thm:Gordonmesh1}.
\end{proof}

Let $\zeta_{\x}=\epsilon_{5}^{(g)}\sqrt{n}\|\x\|_2+\xi_n^{(l)}$ with $\epsilon_{5}^{(g)}>0$ being an arbitrarily small constant independent of $n$. We will first look at the right-hand side of the inequality in (\ref{eq:negproblemma}). The following is then the probability of interest
\begin{equation}
P(\min_{\x\in\{-\frac{1}{\sqrt{n}},\frac{1}{\sqrt{n}}\}^n}\max_{\|\y\|_2=1}(\|\x\|_2\g^T\y+\h^T\x-\epsilon_{5}^{(g)}\sqrt{n}\|\x\|_2)\geq \xi_n^{(l)}).\label{eq:negprobanal0}
\end{equation}
After solving the minimization over $\x$ and the maximization over $\y$ one obtains
\begin{equation}
\hspace{-.3in}P(\min_{\x\in\{-\frac{1}{\sqrt{n}},\frac{1}{\sqrt{n}}\}^n}\max_{\|\y\|_2=1}(\|\x\|_2\g^T\y+\h^T\x-\epsilon_{5}^{(g)}\sqrt{n}\|\x\|_2)\geq \xi_n^{(l)})=P(\|\g\|_2-\sum_{i=1}^{n}|\h_i|/\sqrt{n}-\epsilon_{5}^{(g)}\sqrt{n}\geq \xi_n^{(l)}).\label{eq:negprobanal1}
\end{equation}
We recall that as earlier, since $\g$ is a vector of $m$ i.i.d. standard normal variables it is rather trivial that $P(\|\g\|_2>(1-\epsilon_{1}^{(m)})\sqrt{m})\geq 1-e^{-\epsilon_{2}^{(m)} m}$ where $\epsilon_{1}^{(m)}>0$ is an arbitrarily small constant and $\epsilon_{2}^{(m)}$ is a constant dependent on $\epsilon_{1}^{(m)}$ but independent of $n$. Along the same lines, since $\h$ is a vector of $n$ i.i.d. standard normal variables it is rather trivial that $P(\sum_{i=1}^{n}|\h_i|<(1+\epsilon_{1}^{(n)})n\sqrt{\frac{2}{\pi}})\geq 1-e^{-\epsilon_{2}^{(n)} n}$ where $\epsilon_{1}^{(n)}>0$ is an arbitrarily small constant and $\epsilon_{2}^{(n)}$ is a constant dependent on $\epsilon_{1}^{(n)}$ but independent of $n$. Then from (\ref{eq:negprobanal1}) one obtains
\begin{multline}
P(\min_{\x\in\{-\frac{1}{\sqrt{n}},\frac{1}{\sqrt{n}}\}^n}\max_{\|\y\|_2=1}(\|\x\|_2\g^T\y+\h^T\x-\epsilon_{5}^{(g)}\sqrt{n}\|\x\|_2)\geq \xi_n^{(l)})\\\geq
(1-e^{-\epsilon_{2}^{(m)} m})(1-e^{-\epsilon_{2}^{(n)} n})
P((1-\epsilon_{1}^{(m)})\sqrt{m}-(1+\epsilon_{1}^{(n)})\sqrt{n}\sqrt{\frac{2}{\pi}}-\epsilon_{5}^{(g)}\sqrt{n}\geq \xi_n^{(l)}).
\label{eq:negprobanal2}
\end{multline}
If
\begin{eqnarray}
& & (1-\epsilon_{1}^{(m)})\sqrt{m}-(1+\epsilon_{1}^{(n)})\sqrt{n}\sqrt{\frac{2}{\pi}}-\epsilon_{5}^{(g)}\sqrt{n}>\xi_n^{(l)}\nonumber \\
& \Leftrightarrow & (1-\epsilon_{1}^{(m)})\sqrt{\alpha}-(1+\epsilon_{1}^{(n)})\sqrt{\frac{2}{\pi}}-\epsilon_{5}^{(g)}>\frac{\xi_n^{(l)}}{\sqrt{n}},\label{eq:negcondxipu}
\end{eqnarray}
one then has from (\ref{eq:negprobanal2})
\begin{equation}
\lim_{n\rightarrow\infty}P(\min_{\x\in\{-\frac{1}{\sqrt{n}},\frac{1}{\sqrt{n}}\}^n}\max_{\|\y\|_2=1}(\|\x\|_2\g^T\y+\h^T\x-\epsilon_{5}^{(g)}\sqrt{n}\|\x\|_2)\geq \xi_n^{(l)})\geq 1.\label{eq:negprobanal3}
\end{equation}

We will now look at the left-hand side of the inequality in (\ref{eq:negproblemma}). The following is then the probability of interest
\begin{equation}
P(\min_{\x\in\{-\frac{1}{\sqrt{n}},\frac{1}{\sqrt{n}}\}^n}\max_{\|\y\|_2=1}(\y^TH\x+\|\x\|_2g-\epsilon_{5}^{(g)}\sqrt{n}\|\x\|_2-\xi_n^{(l)})\geq 0).\label{eq:leftnegprobanal0}
\end{equation}
Since $P(g\geq\epsilon_{5}^{(g)}\sqrt{n})<e^{-\epsilon_{6}^{(g)} n}$ (where $\epsilon_{6}^{(g)}$ is, as all other $\epsilon$'s in this paper are, independent of $n$) from (\ref{eq:leftnegprobanal0}) we have
\begin{multline}
P(\min_{\x\in\{-\frac{1}{\sqrt{n}},\frac{1}{\sqrt{n}}\}^n}\max_{\|\y\|_2=1}(\y^TH\x+\|\x\|_2g-\epsilon_{5}^{(g)}\sqrt{n}\|\x\|_2-\xi_n^{(l)})\geq 0)
\\\leq P(\min_{\x\in\{-\frac{1}{\sqrt{n}},\frac{1}{\sqrt{n}}\}^n}\max_{\|\y\|_2=1}(\y^TH\x-\xi_n^{(l)})\geq 0)+e^{-\epsilon_{6}^{(g)} n}.\label{eq:leftnegprobanal1}
\end{multline}
When $n$ is large from (\ref{eq:leftnegprobanal1}) we then have
\begin{multline}
\hspace{-.65in}\lim_{n\rightarrow \infty}P(\min_{\x\in\{-\frac{1}{\sqrt{n}},\frac{1}{\sqrt{n}}\}^n}\max_{\|\y\|_2=1}(\y^TH\x+\|\x\|_2g-\epsilon_{5}^{(g)}\sqrt{n}\|\x\|_2-\xi_n^{(l)})\geq 0)
\leq  \lim_{n\rightarrow\infty}P(\min_{\x\in\{-\frac{1}{\sqrt{n}},\frac{1}{\sqrt{n}}\}^n}\max_{\|\y\|_2=1}(\y^TH\x-\xi_n^{(l)})\geq 0)\\
 =  \lim_{n\rightarrow\infty}P(\min_{\x\in\{-\frac{1}{\sqrt{n}},\frac{1}{\sqrt{n}}\}^n}\max_{\|\y\|_2=1}(\y^TH\x)\geq \xi_n^{(l)})
 =  \lim_{n\rightarrow\infty}P(\min_{\x\in\{-\frac{1}{\sqrt{n}},\frac{1}{\sqrt{n}}\}^n}(\|H\x\|_2)\geq \xi_n^{(l)}).\label{eq:leftnegprobanal2}
\end{multline}
Assuming that (\ref{eq:negcondxipu}) holds, then a combination of (\ref{eq:negproblemma}), (\ref{eq:negprobanal3}), and (\ref{eq:leftnegprobanal2}) gives
\begin{equation}
 \lim_{n\rightarrow\infty}P(\min_{\x\in\{-\frac{1}{\sqrt{n}},\frac{1}{\sqrt{n}}\}^n}(\|H\x\|_2)\geq \xi_n^{(l)})\geq \lim_{n\rightarrow\infty}P(\min_{\x\in\{-\frac{1}{\sqrt{n}},\frac{1}{\sqrt{n}}\}^n}\max_{\|\y\|_2=1}(\|\x\|_2\g^T\y+\h^T\x-\epsilon_{5}^{(g)}\sqrt{n}\|\x\|_2)\geq \xi_n^{(l)})\geq 1.\label{eq:leftnegprobanal3}
\end{equation}

We summarize our results from this subsection in the following lemma.

\begin{lemma}
Let $H$ be an $m\times n$ matrix with i.i.d. standard normal components. Let $n$ be large and let $m=\alpha n$, where $\alpha>0$ is a constant independent of $n$. Let $\xi_n$ be as in (\ref{eq:sqrtnegham1}). Let all $\epsilon$'s be arbitrarily small constants independent of $n$ and let $\xi_n^{(l)}$ be a scalar such that
\begin{equation}
(1-\epsilon_{1}^{(m)})\sqrt{\alpha}-(1+\epsilon_{1}^{(n)})\sqrt{\frac{2}{\pi}}-\epsilon_{5}^{(g)}>\frac{\xi_n^{(l)}}{\sqrt{n}}.\label{eq:negcondxipuneggenlemma}
\end{equation}
Then
\begin{eqnarray}
& & \lim_{n\rightarrow\infty}P(\min_{\x\in\{-\frac{1}{\sqrt{n}},\frac{1}{\sqrt{n}}\}^n}(\|H\x\|_2)\geq \xi_n^{(l)})\geq 1\nonumber \\
& \Leftrightarrow & \lim_{n\rightarrow\infty}P(\xi_n\geq \xi_n^{(l)})\geq 1 \nonumber \\
& \Leftrightarrow & \lim_{n\rightarrow\infty}P(\xi_n^2\geq (\xi_n^{(l)})^2)\geq 1, \label{eq:neggenproblemma}
\end{eqnarray}
and
\begin{equation}
\frac{E\xi_n}{\sqrt{n}}=\frac{E(\max_{\x\in\{-\frac{1}{\sqrt{n}},\frac{1}{\sqrt{n}}\}^n} \|H\x\|_2)}{\sqrt{n}} \geq \sqrt{\alpha}-\frac{1}{4\sqrt{mn}}-\sqrt{\frac{2}{\pi}}.\label{eq:neggenexplemma}
\end{equation}
\label{lemma:neggenlemma}
\end{lemma}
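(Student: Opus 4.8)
The plan is to assemble Lemma \ref{lemma:neggenlemma} directly from the machinery already developed in this subsection, exactly mirroring the structure used for the positive form in Lemma \ref{lemma:posgenlemma}. The statement has two separate claims: the probabilistic bound (\ref{eq:neggenproblemma}) and the expectation bound (\ref{eq:neggenexplemma}). First I would dispose of the expectation bound (\ref{eq:neggenexplemma}), since it is the cheaper of the two: it is precisely the scaled inequality (\ref{eq:neghopubexp1}) already derived from Lemma \ref{lemma:negexplemma} via the chain (\ref{eq:neghopaftlemma2}). So for that part I would simply invoke (\ref{eq:neghopubexp1}) and observe that nothing further needs to be done.

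For the probabilistic claim (\ref{eq:neggenproblemma}), the plan is to chain together the three ingredients produced above under the standing hypothesis (\ref{eq:negcondxipuneggenlemma}), which is the same condition as (\ref{eq:negcondxipu}). The key comparison is Lemma \ref{lemma:negproblemma}, which (through Theorem \ref{thm:Gordonmesh1}) relates the left-hand Gaussian process $\y^T H\x + \|\x\|_2 g$ to the decoupled process $\|\x\|_2 \g^T\y + \h^T\x$. With the choice $\zeta_{\x} = \epsilon_{5}^{(g)}\sqrt{n}\|\x\|_2 + \xi_n^{(l)}$, the right-hand side of (\ref{eq:negproblemma}) was analyzed in (\ref{eq:negprobanal0})--(\ref{eq:negprobanal3}), where the explicit maximization over $\y$ and minimization over $\x$ reduce it to the scalar event $\|\g\|_2 - \sum_i|\h_i|/\sqrt{n} - \epsilon_{5}^{(g)}\sqrt{n} \geq \xi_n^{(l)}$; standard concentration for $\|\g\|_2$ and $\sum_i|\h_i|$ then forces this probability to $1$ in the limit, giving (\ref{eq:negprobanal3}). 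The left-hand side was handled in (\ref{eq:leftnegprobanal0})--(\ref{eq:leftnegprobanal2}), where the auxiliary variable $g$ is stripped off at negligible probability cost $e^{-\epsilon_{6}^{(g)}n}$, leaving $\lim_n P(\min_\x \|H\x\|_2 \geq \xi_n^{(l)})$. Combining these via Lemma \ref{lemma:negproblemma} yields (\ref{eq:leftnegprobanal3}), which is exactly the first line of (\ref{eq:neggenproblemma}). The two equivalences in (\ref{eq:neggenproblemma}) are then immediate: $\xi_n = \min_\x\|H\x\|_2$ by definition (\ref{eq:sqrtnegham1}), and squaring preserves the inequality since all quantities are nonnegative.

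Concretely, I would write the proof as a short assembly statement: the expectation bound (\ref{eq:neggenexplemma}) is (\ref{eq:neghopubexp1}); the probabilistic bound is (\ref{eq:leftnegprobanal3}), obtained by combining the comparison Lemma \ref{lemma:negproblemma} with the right-hand estimate (\ref{eq:negprobanal3}) and the left-hand estimate (\ref{eq:leftnegprobanal2}) under hypothesis (\ref{eq:negcondxipuneggenlemma}); and the equivalences follow from the definition of $\xi_n$ together with the nonnegativity of $\|H\x\|_2$. Since every step is already established in the preceding displays, the "proof" is really a citation of the earlier derivation, exactly as the author indicates with the phrase that the proof follows from the above discussion.

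The main obstacle is not any deep step but rather the bookkeeping of directions: because we are now \emph{minimizing} over $\x$ (so that $\xi_n$ is a min and the SK/positive-form intuition is reversed), one must be careful that the Gordon comparison Theorem \ref{thm:Gordonmesh2} is being applied in the correct orientation ($E\min_i\max_j X_{ij} \leq E\min_i\max_j Y_{ij}$), and correspondingly that the probabilistic Theorem \ref{thm:Gordonmesh1} flips the inequality relative to the Slepian version used for the positive form. The sign choice $\zeta_{\x} = +\epsilon_{5}^{(g)}\sqrt{n}\|\x\|_2 + \xi_n^{(l)}$ and the event $P(g \geq \epsilon_{5}^{(g)}\sqrt{n}) < e^{-\epsilon_{6}^{(g)}n}$ (rather than its complement) are the places where an error would most easily creep in, so the only real care required is verifying that each inequality in the chain points the right way so that the final bound is a genuine lower bound on $\xi_n$.
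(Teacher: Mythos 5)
Your proposal is correct and matches the paper's own proof exactly: the paper simply cites the preceding discussion, (\ref{eq:neghopubexp1}) for the expectation bound, and (\ref{eq:leftnegprobanal3}) for the probabilistic bound, which is precisely the assembly you describe (Lemma \ref{lemma:negproblemma} with $\zeta_{\x}=\epsilon_{5}^{(g)}\sqrt{n}\|\x\|_2+\xi_n^{(l)}$, the right-hand estimate (\ref{eq:negprobanal3}), and the left-hand estimate (\ref{eq:leftnegprobanal2}) under hypothesis (\ref{eq:negcondxipuneggenlemma})). Your attention to the orientation of the Gordon comparison and the sign of the $g$-stripping event is exactly the right place to focus care, and your handling of both is consistent with the paper's derivation.
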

\begin{proof}
The proof follows from the above discussion, (\ref{eq:neghopubexp1}), and (\ref{eq:leftnegprobanal3}).
\end{proof}

\section{Algorithmic aspects of Hopfield forms}
\label{sec:alghop}

In this section we look at a couple of simple algorithms that can be used to approximately solve optimization problems we studied in the previous sections. The algorithms are clearly not the best possible but are fairly simple. Given their simple structure it will turn out to be possible to provide an analytical characterization of the optimal values that they achieve. In return these values would automatically become bounds on the true optimal values. These bounds won't be as good as those we presented in the previous sections but will in a way be their algorithmic complements. As earlier in the paper, we will start with the positive Hopfield form and then we will present the corresponding results for the negative Hopfield form.

\subsection{Simple approximate algorithms for the positive Hopfield forms}
\label{sec:alghoppos}

We recall that our goal it this subsection will be to present algorithms that provide an approximate solution to (\ref{eq:posham1}) (or alternatively (\ref{eq:sqrtposham1})). Before, proceeding further we recall that in the previous couple of sections it was a bit easier to focus on (\ref{eq:sqrtposham1}) instead of focusing on (\ref{eq:posham1}). In this section though, it will be the other way around, i.e. we will focus on the original problem (\ref{eq:posham1}) which we restate below
\begin{equation}
\xi_p^2=\max_{\x\in\{-\frac{1}{\sqrt{n}},\frac{1}{\sqrt{n}}\}^n}\|H\x\|_2^2.\label{eq:posham1alg}
\end{equation}
In this section we will present two simple approximate algorithms that can be used to solve approximately (\ref{eq:posham1alg}). We will first present an iterative algorithm that fixes components of $\x$ one at the time and then an algorithm based on the properties of eigenvalues and eigenvectors of Gaussian random matrices.

\subsubsection{An iterative approximate algorithm for the positive Hopfield forms}
\label{sec:alghopposit}

In this section we present an iterative algorithm that approximately solves (\ref{eq:posham1alg}). The algorithm is very simple and probably well known. However, we are not aware of any analytical results related to its quality of performance when applied in a statistical scenario considered in this paper. The analysis is actually fairly simple and we think it would be useful to have such a result recorded. Also, since it will be a bit easier to present and follow the exposition we will until the end of this subsection assume that everything is rescaled so that $\x_i\in\{-1,1\}$. Now, going back to the algorithm - as we just stated the algorithm is fairly simple: it starts by setting $\x_1=1$ and then fixing $\x_2$ so that $\|H_{:,1:2}\x_{1:2}\|_2^2$ is maximized ($H_{:,1:2}$ stands for the first two columns of $H$ and $\x_{1:2}$ stands for the first two components of $\x$). After $\x_2$ is fixed the algorithm then proceeds by fixing $\x_3$ so that $\|H_{:,1:3}\x_{1:3}\|_2^2$ is maximized ($H_{:,1:3}$ stands for the first three columns of $H$ and $\x_{1:3}$ stands for the first three components of $\x$) and so on until one fixes all components of $\x$.

To analyze the algorithm we will set $\hat{\x}_1=1$, $r_1=\|H_{:,1}\|_2^2$, and for any $2\leq k\leq n$
\begin{eqnarray}
\hat{\x}_k & = & \mbox{argmax}_{\x_k\in\{-1,1\}}\|H_{:,1:k}\begin{bmatrix} \hat{\x}_{1:k-1}\\ \x_k\end{bmatrix} \|_2^2\nonumber \\
r_k & = & \max_{\x_k\in\{-1,1\}}\|H_{:,1:k}\begin{bmatrix} \hat{\x}_{1:k-1}\\ \x_k\end{bmatrix} \|_2^2=\|H_{:,1:k}\hat{\x}_{1:k}\|_2^2.\label{eq:defrposit}
\end{eqnarray}
Our goal will be to compute $Er_n$. We will do so in a recursive fashion. To that end we will start with $Er_2$
\begin{multline}
Er_2=\max_{\x_2\in\{-1,1\}}\|H_{:,1:2}\begin{bmatrix} \hat{\x}_{1}\\ \x_2\end{bmatrix} \|_2^2
=\max_{\x_2\in\{-1,1\}}\|H_{:,1:2}\begin{bmatrix} 1\\ \x_2\end{bmatrix} \|_2^2\\=E\|H_{:,1}\|_2^2+2E(\max_{\x_2\in\{-1,1\}}\x_2(H_{:,2}^TH_{:,1}))+E\|H_{:,2}\|_2^2
=Er_1+2\sqrt{\frac{2}{\pi}}E\sqrt{r_1}+m.\label{eq:rposit1}
\end{multline}
One can then apply a similar strategy to obtain for a general $2\leq k\leq n$
\begin{multline}
Er_k=\max_{\x_k\in\{-1,1\}}\|H_{:,1:k}\begin{bmatrix} \hat{\x}_{1:k-1}\\ \x_k\end{bmatrix} \|_2^2
=\max_{\x_k\in\{-1,1\}}\|\begin{bmatrix}H_{:,1:k-1} & H_{:,k} \end{bmatrix}\begin{bmatrix} \hat{\x}_{1:k-1}\\ \x_k\end{bmatrix} \|_2^2\\=E\|H_{:,1:k-1}\hat{\x}_{1:k-1}\|_2^2+2E(\max_{\x_k\in\{-1,1\}}\x_k(H_{:,k}^TH_{:,1:k-1}\hat{\x}_{1:k-1}))+E\|H_{:,k}\|_2^2
=Er_{k-1}+2\sqrt{\frac{2}{\pi}}E\sqrt{r_{k-1}}+m.\label{eq:rposit2}
\end{multline}
To make the exposition easier we will assume that $n$ is large and switch to the limiting behavior of $Er$'s. Assuming concentration of $r_k$'s (for $k$ proportional to $n$) around their mean values gives $\lim_{n\rightarrow\infty} \frac{E\sqrt{r_k}}{n}=\lim_{n\rightarrow\infty} \frac{\sqrt{Er_k}}{n}$. One then based on (\ref{eq:defrposit}), (\ref{eq:rposit1}), and (\ref{eq:rposit2}) can establish the following recursion for finding $Er_n$
\begin{equation}
\phi_k=\phi_{k-1}+2\sqrt{\frac{2}{\pi}}\sqrt{\phi_{k-1}}+m,\label{eq:rposit3}
\end{equation}
with $\phi_1=m$ and $\lim_{n\rightarrow\infty}\frac{Er_n}{n}=\lim_{n\rightarrow\infty}\frac{\phi_n}{n}$. Computing the last limit can then be done to a fairly high precision for any different $m$. We do mention, for example that for $m=n$ (i.e. $\alpha=1$) one has
\begin{equation}
\lim_{n\rightarrow\infty}\frac{Er_n}{n^2}=\lim_{n\rightarrow\infty}\frac{\phi_n}{n^2}\approx 2.5259.\label{eq:numpositlb}
\end{equation}
One can also compare this result to the results of the previous section to get
\begin{equation}
\lim_{n\rightarrow\infty}\frac{E\xi_p}{\sqrt{n}}\geq\lim_{n\rightarrow\infty}\frac{E\sqrt{r_n}}{n}=\lim_{n\rightarrow\infty}\frac{\sqrt{\phi_n}}{n}\approx \sqrt{2.5259}\approx 1.5893.\label{eq:numpositlb1}
\end{equation}
This is a bit worse than $1.763$ bound one would get in Subsection \ref{sec:poshoplb} when $\alpha=1$ (i.e. $m=n$)). However, the bound in (\ref{eq:numpositlb1}) is algorithmic, i.e. there is an algorithm (in fact a very simple one with a quadratic complexity) that achieves it, whereas the bound from Subsection \ref{sec:poshoplb} is purely theoretical and is given without any polynomial algorithm that achieves it.

\subsubsection{A dominating eigenvector algorithm for the positive Hopfield forms}
\label{sec:alghopposeig}

In this section we present another simple algorithm that approximately solves (\ref{eq:posham1alg}). This algorithm is also probably well known, but we think that it would a good idea to collect at one place the technical results related to the objective value one can get through it. In that way it will be easier to know how far away from the optimal its performance is.

As the name suggests the algorithm operates on eigenvectors of $H$. The idea is to decompose $H^TH$ through the eigen-decomposition in the following way
\begin{equation}
H^TH=Q\Lambda Q^T,\label{eq:eigdec}
\end{equation}
where obviously $Q$ is an $n\times n$ matrix such that $Q^TQ=I$ and $\Lambda$ is a diagonal matrix of all eigenvalues of matrix $H^TH$. Now, without a loss of generality, we will assume that the elements of the diagonal matrix $\Lambda$ (essentially the eigenvalues of $H^TH$) are sorted in the decreasing order, i.e. $\Lambda_{1,1}\geq \Lambda_{2,2}\geq \dots\geq \Lambda_{n,n}$. The algorithm then works in the following simple way: take $\x$ as the signs of components of vector $Q_{:,1}$, i.e.
\begin{equation}
\hat{\x}^{(eig)}=\mbox{sign}(Q_{:,1}).\label{eq:eigoptx}
\end{equation}
Let
\begin{equation}
r^{(eig)}=\|H\hat{\x}^{(eig)}\|_2^2=(\mbox{sign}(Q_{:,1}))^TQ\Lambda Q^T\mbox{sign}(Q_{:,1}).\label{eq:eigoptr}
\end{equation}
One then further has
\begin{equation}
r^{(eig)}=(\mbox{sign}(Q_{:,1}))^TQ\Lambda Q^T\mbox{sign}(Q_{:,1})\geq \Lambda_{1,1}(\sum_{i=1}^{n}|Q_{i,1}|)^2.\label{eq:eigoptr1}
\end{equation}
Using the theory of random Gaussian matrices one then has that all quantities of interest concentrate and
\begin{equation}
\lim_{n\rightarrow\infty}\frac{E\Lambda_{1,1}}{n}=(\sqrt{\alpha}+1)^2.\label{eq:eigoptr2}
\end{equation}
Furthermore, one can think of all components of $Q_{:,1}$ as being standard normal scaled by the the norm-2 of the vector they comprise. Since everything concentrates when $n$ is large one then has
\begin{equation}
\lim_{n\rightarrow\infty}E(\frac{\sum_{i=1}^{n}|Q_{i,1}|}{\sqrt{n}})^2=(\sqrt{\frac{2}{\pi}})^2=\frac{2}{\pi}.\label{eq:eigoptr3}
\end{equation}
A combination of (\ref{eq:eigoptr1}), (\ref{eq:eigoptr2}), and (\ref{eq:eigoptr3}) then gives
\begin{equation}
\lim_{n\rightarrow\infty}\frac{Er^{(eig)}}{n^2}\geq \lim_{n\rightarrow\infty}\frac{\Lambda_{1,1}(\sum_{i=1}^{n}|Q_{i,1}|)^2}{n^2}=(\sqrt{\alpha}+1)^2\frac{2}{\pi}.\label{eq:eigoptr4}
\end{equation}
One can also compare this result to the results of the previous section. For example, let $\alpha=1$ and
\begin{equation}
\lim_{n\rightarrow\infty}\frac{E\xi_p}{n}\geq\lim_{n\rightarrow\infty}\frac{E\sqrt{r^{(eig)}}}{n}\geq \lim_{n\rightarrow\infty}\sqrt{\frac{8}{\pi}}\approx \sqrt{2.5465}\approx 1.5958.\label{eq:numposeiglb1}
\end{equation}
This is again somewhat worse than $1.763$ bound one would get in Subsection \ref{sec:poshoplb} when $\alpha=1$ (i.e. $m=n$) but a bit better than what one can get through the mechanism of the previous subsection and ultimately (\ref{eq:numpositlb1}). However, the bound in (\ref{eq:numposeiglb1}) is again algorithmic. The corresponding algorithm though is a bit more complex than the one from the previous subsection since it involves performing the eigen-decomposition of $H^TH$. However, we should mention that the value given in (\ref{eq:numposeiglb1}) is substantially lower than what the algorithm will indeed give in practice. The reason is of course the cross-correlation of components of different eigenvectors and the fact that the cross products between $\hat{\x}^{(eig)}$ and vectors $Q_{i,:},2\leq i\leq n$, coupled with corresponding eigenvalues will also contribute to the true value of $r^{(eig)}$. To obtain the exact value of $\lim_{n\rightarrow\infty}\frac{Er^{(eig)}}{n^2}$ one would have to account for this as well. This is not so easy and we do not pursue it further. However, practically speaking we do mention, that roughly one can expect that $\lim_{n\rightarrow\infty}\frac{Er^{(eig)}}{n^2}\approx 2.9$ or stated differently $\lim_{n\rightarrow\infty}\frac{E\sqrt{r^{(eig)}}}{n}\approx 1.7$. On the other hand, to be completely fair to the algorithm given in the previous subsection, we should mention that its various adaptations are possible as well. For example, among the simplest ones would be to also keep sorting the columns of $H$ and in each step instead of choosing the first next column choose the column with the largest norm-2. Evaluating the performance of such an algorithm precisely is again not super easy. We do mention from practical experience that it provides a similar objective value as does the eigenvector mechanism presented in this subsection.

\subsection{A simple approximate algorithm for the negative Hopfield forms}
\label{sec:alghopneg}

We recall that our goal it this subsection will be to present algorithms that provide an approximate solution to (\ref{eq:negham1}) (or alternatively (\ref{eq:sqrtnegham1})). Before proceeding further we note that in Section \ref{sec:neghoplb} it was be a bit easier to focus on (\ref{eq:sqrtnegham1}) instead of focusing on (\ref{eq:negham1}). In this section though, it will be the other way around, i.e. we will focus on the original problem (\ref{eq:negham1}) which we restate below
\begin{equation}
\xi_n^2=\min_{\x\in\{-\frac{1}{\sqrt{n}},\frac{1}{\sqrt{n}}\}^n}\|H\x\|_2^2.\label{eq:negham1alg}
\end{equation}
Below we will present a simple approximate algorithm that can be used to solve approximately (\ref{eq:negham1alg}). The algorithm will be a counterpart for the negative form to the iterative algorithm given in Section \ref{sec:alghopposit} for the positive Hopfield form.

\subsubsection{An iterative approximate algorithm for the negative Hopfield forms}
\label{sec:alghopnegit}

As mentioned above, in this section we present a counterpart to the iterative algorithm given in Subsection \ref{sec:alghopposit}. Clearly, the algorithm that we will present here approximately solves (\ref{eq:negham1alg}). In fact as when we looked at the positive form we will again assume that everything is scaled so that $\x_i\in\{-1,1\}$. In fact, the algorithm is almost the same as the algorithm from Subsection \ref{sec:alghopposit}: it starts by setting $\x_1=1$ and then fixing $\x_2$ so that $\|H_{:,1:2}\x_{1:2}\|_2^2$ is now \emph{minimized} (as in Subsection \ref{sec:alghopposit}, $H_{:,1:2}$ stands for the first two columns of $H$ and $\x_{1:2}$ stands for the first two components of $\x$). After $\x_2$ is fixed the algorithm then proceeds by fixing $\x_3$ so that $\|H_{:,1:3}\x_{1:3}\|_2^2$ is \emph{minimized} ($H_{:,1:3}$ stands for the first three columns of $H$ and $\x_{1:3}$ stands for the first three components of $\x$) and so on until one fixes all components of $\x$.

Similarly to what we did when we analyzed the positive counterpart, to analyze the algorithm we will set $\hat{\x}_1=1$, $r_1^{(neg)}=\|H_{:,1}\|_2^2$, and for any $2\leq k\leq n$
\begin{eqnarray}
\hat{\x}_k & = & \mbox{argmin}_{\x_k\in\{-1,1\}}\|H_{:,1:k}\begin{bmatrix} \hat{\x}_{1:k-1}\\ \x_k\end{bmatrix} \|_2^2\nonumber \\
r_k^{(neg)} & = & \min_{\x_k\in\{-1,1\}}\|H_{:,1:k}\begin{bmatrix} \hat{\x}_{1:k-1}\\ \x_k\end{bmatrix} \|_2^2=\|H_{:,1:k}\hat{\x}_{1:k}\|_2^2.\label{eq:defrnegit}
\end{eqnarray}
Our goal will be to compute $Er_n^{(neg)}$. We will do so in a recursive fashion. To that end we will start with $Er_2^{(neg)}$
\begin{multline}
Er_2^{(neg)}=\min_{\x_2\in\{-1,1\}}\|H_{:,1:2}\begin{bmatrix} \hat{\x}_{1}\\ \x_2\end{bmatrix} \|_2^2
=\min_{\x_2\in\{-1,1\}}\|H_{:,1:2}\begin{bmatrix} 1\\ \x_2\end{bmatrix} \|_2^2\\=E\|H_{:,1}\|_2^2+2E(\min_{\x_2\in\{-1,1\}}\x_2(H_{:,2}^TH_{:,1}))+E\|H_{:,2}\|_2^2
=Er_1^{(neg)}-2\sqrt{\frac{2}{\pi}}E\sqrt{r_1^{(neg)}}+m.\label{eq:rnegit1}
\end{multline}
One can then apply a similar strategy to obtain for a general $2\leq k\leq n$
\begin{multline}
Er_k^{(neg)}=\min_{\x_k\in\{-1,1\}}\|H_{:,1:k}\begin{bmatrix} \hat{\x}_{1:k-1}\\ \x_k\end{bmatrix} \|_2^2
=\min_{\x_k\in\{-1,1\}}\|\begin{bmatrix}H_{:,1:k-1} & H_{:,k} \end{bmatrix}\begin{bmatrix} \hat{\x}_{1:k-1}\\ \x_k\end{bmatrix} \|_2^2\\=E\|H_{:,1:k-1}\hat{\x}_{1:k-1}\|_2^2+2E(\min_{\x_k\in\{-1,1\}}\x_k(H_{:,k}^TH_{:,1:k-1}\hat{\x}_{1:k-1}))+E\|H_{:,k}\|_2^2
=Er_{k-1}^{(neg)}-2\sqrt{\frac{2}{\pi}}E\sqrt{r_{k-1}^{(neg)}}+m.\label{eq:rnegit2}
\end{multline}
As earlier, to make the exposition easier we will assume that $n$ is large and switch to the limiting behavior of $Er^{(neg)}$'s. Again, assuming concentration of $r_k^{(neg)}$'s (for $k$ proportional to $n$) around their mean values will then give $\lim_{n\rightarrow\infty} \frac{E\sqrt{r_k^{(neg)}}}{n}=\lim_{n\rightarrow\infty} \frac{\sqrt{Er_k^{(neg)}}}{n}$. One then based on (\ref{eq:defrnegit}), (\ref{eq:rnegit1}), and (\ref{eq:rnegit2}) can establish the following recursion for finding $Er_n^{(neg)}$
\begin{equation}
\phi_k=\phi_{k-1}-2\sqrt{\frac{2}{\pi}}\sqrt{\phi_{k-1}}+m,\label{eq:rnegit3}
\end{equation}
with $\phi_1=m$ and $\lim_{n\rightarrow\infty}\frac{Er_n^{(neg)}}{n^2}=\lim_{n\rightarrow\infty}\frac{\phi_n}{n^2}$. Computing the last limit can then be done to a fairly high precision for any different $m$. Following the example we chose in the positive case, we note that for $m=n$ (i.e. $\alpha=1$) one has
\begin{equation}
\lim_{n\rightarrow\infty}\frac{Er_n^{(neg)}}{n^2}=\lim_{n\rightarrow\infty}\frac{\phi_n}{n^2}\approx .3072.\label{eq:numnegitlb}
\end{equation}
One can also compare this result to the results of the previous section to get
\begin{equation}
\lim_{n\rightarrow\infty}\frac{E\xi_n}{\sqrt{n}}\geq\lim_{n\rightarrow\infty}\frac{E\sqrt{r_n^{(neg)}}}{n}=\lim_{n\rightarrow\infty}\frac{\sqrt{\phi_n}}{n}\approx \sqrt{0.3072}\approx 0.55.\label{eq:numnegitlb1}
\end{equation}
This is substantially away from the lower bound $0.2021$ one would get in Subsection \ref{sec:neghoplb} when $\alpha=1$ (i.e. $m=n$)). However, as was the case with the positive form in earlier sections, the bound given above is algorithmic.


\section{Conclusion}
\label{sec:conc}

In this paper we looked at classic Hopfield forms. We first viewed the standard positive Hopfield form and then defined its a negative counterpart. We were interested in their behavior in the zero-temperature limit which essentially amounts to the behavior of their ground state energies. We then sketched mechanisms that can be used to provide upper and lower bounds for the ground state energies of both models.

To be a bit more specific, we first provided purely theoretical bounds on the expected values of the ground state energy of the positive Hopfield model. These bounds appear to be fairly close to each other (moreover, the upper bounds actually don't even require the thermodynamic regime). In addition to that we also presented two very simple (certainly well known) algorithms that can be used to approximately determine the ground state energy of the positive Hopfield model. For both algorithms we then sketched how one can determine their performance guarantees. As it turned out, these algorithms provide a fairly good approximations (while the analytical results that we provided demonstrated that they are in certain scenarios about $10\%$ away from the optimal values, practically, in these same scenarios, their objective values are not more than $5\%$ away from the optimal value).

We then translated our results related to the positive Hopfield form to the case of the negative Hopfield form. We again targeted the ground state regime and provided a theoretical lower bound for the expected behavior of the ground state energy. We also, showed how one of the algorithms that we designed for the positive form can easily be adapted to fit the negative form. This enabled us to get an algorithmic upper bound for the ground state energy of the negative form. While, the bounds we obtained for the negative form are not as good as the ones we obtained for the positive form, they are obtained in a very simple manner and provide in a way a quick assessment how the ground state energies of these forms behave.

For several results that relate to the behavior of the expected ground state energies, we also showed that the corresponding (more general) probabilistic results hold in the thermodynamic limit.

Moreover, the purely theoretical results we presented are for the so-called Gaussian Hopfield models. Often though a binary Hopfield model may be a more preferred optiion. However, all results that we presented can easily be extended to the case of binary Hopfield models (and for that matter to an array of other statistical models as well). There are many ways how this can be done. Proving that is not that hard. In fact there are many ways how it can be done, but typically would boil down to repetitive use of the central limit theorem. For example, a particularly simple and elegant approach would be the one of Lindeberg \cite{Lindeberg22}. Adapting our exposition to fit into the framework of the Lindeberg principle is relatively easy and in fact if one uses the elegant approach of \cite{Chatterjee06} pretty much a routine. Since we did not create these techniques we chose not to do these routine generalizations. However, to make sure that the interested reader has a full grasp of generality of the results presented here, we do emphasize again that pretty much any distribution that can be pushed through the Lindeberg principle would work in place of the Gaussian one that we used.

We should also mention that the algorithms we presented are simple and certainly not the best known. One can design algorithms that can practically achieve a way better performance for both Hopfield forms. However, since their performance analysis is not easy we leave their detailed exposition for an algorithmic presentation. We do mention though, that out idea here was not to introduce the best possible algorithms but rather to show how one can use the simple ones to get results related to the behavior of the optimal objective value.

It is also important to emphasize that we in this paper presented a collection of very simple observations. One can improve many of the results that we presented here but at the expense of the introduction of a more complicated theory. We will present results in many such directions elsewhere. We do recall though, that in this paper we were mostly concerned with the behavior of the ground state energies. A vast majority of our results can be translated to characterize the behavior of the free energy when viewed at any temperature. However, as mentioned above, this requires a way more detailed exposition and we will present it elsewhere.

\begin{singlespace}
\bibliographystyle{plain}
\bibliography{HopBndsRefs}
\end{singlespace}

\end{document}